\documentclass[11pt]{article}
\usepackage{amsmath,amsthm,amsfonts,amssymb,amscd, amsxtra, mathrsfs,enumitem, mathtools}
\usepackage{url}

\usepackage[margin=2.3 cm,nohead]{geometry}
\usepackage{color}
\usepackage{pdfsync}
\usepackage{hyperref}
\usepackage{graphicx}
\usepackage{subcaption}
\usepackage{caption}
\usepackage{booktabs}    
\usepackage{float}  
\usepackage[
  backend=biber,
  style=ieee,
  citestyle=numeric-comp, 
  url=false,
  isbn=false,
  sortcites=true
]{biblatex}

\usepackage{multicol}
\usepackage{multirow}
\usepackage{siunitx}
\setlist[enumerate]{label=(\roman*), align=left}
\newcommand{\matM}{\mathbb M}
\newtheorem{theorem}{Theorem}
\newtheorem{lemma}[theorem]{Lemma}

\newtheorem{corollary}[theorem]{Corollary}
\newtheorem{proposition}[theorem]{Proposition}

\newtheorem{remark}{Remark}

\newtheorem{example}{Example}
\newtheorem{assumption}{Assumption}

\usepackage{graphicx,float}

\usepackage{algorithm}
\usepackage{algpseudocode}

\newcommand{\R}{\mathbb{R}}
\newcommand{\setS}{\mathcal{S}}
\newcommand{\inner}[2]{\langle #1, #2 \rangle}

\DeclareMathOperator{\diag}{diag}  
\DeclareMathOperator{\argmin}{argmin}  
\usepackage{amssymb}
\usepackage{relsize}

\begin{document}
\title{Convergence Rates for Variational Inequality Projection Neural Networks with a State-Dependent Metric}

\author{
Mohammed Alshahrani \thanks{Department of Mathematics, King Fahd University of Petroleum \& Minerals, Dhahran, 31261, Saudi Arabia\\
Interdisciplinary Research Center for Smart Mobility and Logistics, King Fahd University of Petroleum \& Minerals, Dhahran, 31261, Saudi Arabia, (e-mail:{\tt mshahrani@kfupm.edu.sa}).}
}

\maketitle

\begin{abstract}
We study continuous-time projection neural networks for variational inequalities
on closed convex sets. A positive definite matrix that depends on the state
preconditions the operator, and its inverse defines the projection
metric. Existing convergence analyses of this flow cover Hessian-generated
inverse metrics and state-dependent scalar metrics. In the first case, a Bregman
distance eliminates metric-derivative terms. We treat a matrix metric whose
inverse is of neither kind. We prove joint regularity of the projection in its
argument and metric. Under common spectral bounds, the Euclidean Lipschitz estimate
improves from the squared bound to the bound itself. For Lipschitz strongly
monotone operators and Lipschitz metrics, we prove local exponential
convergence with explicit rate and radius. On compact feasible sets, an explicit
metric-variation bound yields global exponential convergence. A twice
continuously differentiable, uniformly positive definite metric and a strongly
monotone linear operator produce an annulus of periodic orbits. The inverse
metric violates Hessian integrability throughout the annulus. Deterministic
computations confirm the analytic formulas and quantify slack in both
convergence certificates.
\end{abstract}

\noindent
{\bf Keywords:} projection neural networks; variational inequalities; state-dependent metric; variable-metric methods; exponential convergence; periodic orbits\\

\medskip

\noindent
{\bf AMS subject classification:}  37C27, 37N40, 47H05, 49J40, 90C33
%

\section{Introduction}

Variational inequality (VI) problems provide a unifying framework for a wide
range of equilibrium models arising in optimization, game theory, network
analysis, and constrained dynamical systems. Given a closed convex set
$\setS \subset \mathbb R^n$ and a mapping $F:\mathbb R^n \to \mathbb R^n$, the
classical variational inequality problem $\mathrm{VI}(F,\setS)$ consists of
finding a point $\bar{x} \in \setS$ such that
\[
\langle F(\bar{x}), x - \bar{x} \rangle \ge 0, \quad \forall x \in \setS.
\]
Projection-based methods and their continuous-time counterparts are a standard
route to such problems, particularly under monotonicity assumptions on $F$.

The continuous-time line is usually traced to the projected dynamical system
(PDS) of Dupuis and Nagurney \cite{Dupuis1993}. Write $P_{\setS}$ for Euclidean
projection onto $\setS$ and $\pi(x,v)$ for the directional derivative
$\lim_{\delta\downarrow0}\delta^{-1}(P_{\setS}(x+\delta v)-x)$,
which for closed convex $\setS$ is the Euclidean projection of $v$ onto the
tangent cone at $x$ \cite{Dupuis1993}. The PDS is
\[
\frac{dx}{dt} = \pi\bigl(x,-F(x)\bigr),
\]
and its stationary points are exactly the solutions of $\mathrm{VI}(F,\setS)$.
Its right-hand side is discontinuous, so classical ODE theory does not apply.
Existence and uniqueness were established through the Skorokhod problem
\cite{Dupuis1993,Nagurney1996}, and existence again later by an independent
argument that recasts the flow as a complementarity system \cite{Heemels2000}.
Either route needs an absolutely continuous solution satisfying the dynamics
almost everywhere, because the field is not continuous. The
same projected-flow idea
has since been carried to Hilbert spaces of arbitrary dimension and to
evolutionary variational inequalities \cite{Cojocaru2005}. Throughout
that work the metric is fixed in advance, even where generalized solutions or
nonsmooth analysis are needed.

Projection neural networks (PNNs) use a different field with the same equilibria
\cite{Hu2006,Xia2004},
\[
\frac{dx}{dt}= \lambda\bigl(P_{\setS}(x-\alpha F(x)) - x\bigr),
\]
where $\lambda,\alpha>0$ are fixed gains. Several formulations replace the scalar
$\lambda$ by a constant diagonal
matrix, or by a general square matrix, acting on the residual
\cite{Xia2004,Hu2007}. Such a matrix left-preconditions the residual and leaves
the projection Euclidean. The two flows
are not the same. At fixed $\alpha$ the residual $P_{\setS}(x-\alpha
F(x))-x$ is a chord into $\setS$, and the field is continuous, indeed locally
Lipschitz. In general the tangent-cone field is recovered as
$\alpha\downarrow0$ with $\lambda=1/\alpha$, which is how $\pi$ is defined in
\cite{Dupuis1993}. At a fixed $\alpha$ the two need not agree, though they do in
special geometries such as an affine $\setS$. Global convergence, Lyapunov
stability and exponential rates have been established for PNNs under several
monotonicity hypotheses, each strictly stronger than monotonicity itself
\cite{Xia2004,Hu2006,Hu2007}. Hu and
Wang exhibit a monotone affine operator on a box whose trajectories oscillate
periodically and never reach the solution \cite{Hu2006}, and
Section~\ref{sec:stabI} returns to that obstruction. The arguments that do
succeed rest on nonexpansiveness of the Euclidean projection and on energy
functions built in a fixed geometry.

The practical motivation for variable metrics comes from discrete algorithms.
When Euclidean geometry is poorly aligned with the operator or the feasible set,
scaled gradient projection uses positive step sizes $\alpha_k$ and
iteration-dependent positive definite matrices $M_k$ built from curvature
information or problem structure \cite{Bonettini2009}. Its
update has the form
\[
x_{k+1} = P_{\setS,M_k^{-1}}\bigl(x_k - \alpha_k M_k F(x_k)\bigr).
\]
That history motivates allowing the metric to vary with the state. The question
here is analytical rather than comparative. We ask which well-posedness and
convergence guarantees survive when the matched preconditioner and projection
metric in a continuous flow depend on the current point. We do not claim that this
state dependence accelerates the flow. Splitting and resolvent methods replace the
projection by the resolvent of a nonsmooth term, which changes the step rather
than the metric, and they do so at the level of the iteration \cite{Noor2001}.

Continuous-time models with non-Euclidean geometry are well established, and in
much of that work the geometry is generated by a convex potential. The
Hessian--Riemannian gradient flow $\dot x+\nabla^2\varphi(x)^{-1}\nabla f(x)=0$ takes
its metric from the Hessian of a Legendre function $\varphi$, whose gradient blows up at the
boundary of the feasible set and so keeps the trajectory inside
\cite{Alvarez2004,Attouch2004}. Mirror descent dynamics for monotone variational
inequalities integrate in the dual space and recover the state through the mirror
map of a distance-generating function \cite{Mertikopoulos2018}. Implicit dynamics
built from the resolvents of monotone operators form a large class, surveyed in
\cite{Csetnek2020}. Recent flows built for safety and invariance keep
the Euclidean inner product and modify instead the set that is projected onto,
replacing the tangent cone by a restricted tangent set that stays nonempty on a
neighbourhood of the feasible set \cite{Allibhoy2025}. A further variant uses a
state-dependent feasible set $K(x)$, giving
$\dot x=\lambda\bigl(P_{K(x)}(x-\alpha F(x))-x\bigr)$ for quasi-variational
inequalities \cite{Noor2003}.

Where a potential is present it is doing real work. It supplies the Lyapunov
functional, as a Bregman distance or a Fenchel coupling, and Alvarez, Bolte and
Brahic characterize when that is available. Their Theorem~3.1 says the
displacement fields $x\mapsto x-y$ are all gradients in a Riemannian metric
$H$ exactly when $H=\nabla^2\varphi$ for a strictly convex $\varphi$ of class $C^3$
\cite{Alvarez2004}. Off that class one may still have a Lyapunov functional, but
not one whose gradient is the displacement field, which is what makes the
metric-derivative term disappear.

The potential is not universal, however, and the work closest to ours does
without it. The closest is that of Hauswirth, Bolognani and
D\"orfler, who study projected dynamics under a pointwise Riemannian metric
\cite{Hauswirth2020}. Their metric is genuinely state dependent, and it makes the
projection directions oblique on irregular and possibly nonconvex domains. Their
projected field is the metric projection of the vector field onto the tangent
cone. On such domains that map can be multivalued or discontinuous, and their
general existence theory uses Krasovskii solutions. Under stronger regularity
they recover Carath\'eodory solutions and uniqueness. Optimization on Riemannian
manifolds also works outside the Euclidean metric, and there the iterates live on
the manifold itself rather than in a convex subset of $\R^n$. A metric is fixed
first, and the algorithmic work goes into realizing steps on the manifold through
retractions, vector transports and local models
\cite{AbsilMahonySepulchre2008,Ring2012}.

The map $P_{\setS,\matM(x)^{-1}}(x-\alpha\matM(x)F(x))$, with a
state-dependent positive definite metric matched to the preconditioner, has both
discrete- and continuous-time antecedents. In discrete time it appears in 1988 as
a variable-metric gradient projection process with a line search for a smooth
objective \cite{Gawande1988}. Antipin studies the continuous Euclidean
form $\dot x+x=P_{\setS}(x-\alpha\nabla f(x))$, deriving a descent estimate, a
step-size window determined by the gradient Lipschitz constant, and an explicit
exponential rate under strong convexity, while also treating infeasible initial
points \cite{Antipin1994}. He attributes the flow and its exponential stability to
earlier work of his own from 1989. It was later carried to Hilbert space
\cite{Bolte2003} and to monotone inclusions \cite{Abbas2015,Bot2018}.
Mijajlović and Jaćimović treat the more general quasi-variational setting in which
the feasible set also moves, and cite earlier continuous variable-metric methods
for minimization and variational inequalities \cite{Mijajlovic2018}.

The flow this paper analyses is therefore
\[
\frac{dx}{dt} = \lambda\bigl(P_{\setS,\matM(x)^{-1}}(x-\alpha \matM(x)F(x)) - x\bigr),
\]
written here as a state-dependent scaled projection neural network (SD-SPNN),
with $\setS$ a fixed closed convex subset of $\R^n$ and $\matM(\cdot)$ a
matrix-valued map taking positive definite values. The inner product
$\langle\cdot,\cdot\rangle_{\matM(x)}$ moves with the state while the feasible set
does not, so the geometry is state dependent without being intrinsic, and
different choices of $\matM(\cdot)$ give different flows on the same set.

What is open is the case of a metric that is neither a Hessian nor a multiple of
the identity. The convergence theorem of \cite{Mijajlovic2018} requires
$\matM(\cdot)^{-1}=\Phi''$ for a strongly convex $\Phi$, and its Lyapunov function
is the Bregman distance of that $\Phi$. The same hypothesis appears in
\cite{Amochkina1997}. A state-dependent scalar inverse metric
$\matM(x)^{-1}=\beta(x)I$ falls outside that hypothesis, because
$\nabla^2\Phi=\beta(x)I$ forces $\beta$ to be constant when $n\ge2$.
Mijajlović and Jaćimović obtain an exponential estimate for that scalar class
\cite{Mijajlovic2018}. It is not a formality. The Bregman functional is the object
that makes those proofs work, and Theorem~3.1 of \cite{Alvarez2004} identifies
Hessian metrics as exactly the class for which it is available with the
displacement field as its gradient. Although \cite{Mijajlovic2018} states
Theorem~4 as a convergence result, its proof yields
an exponential estimate. The distinction is the integrability hypothesis, not the
presence of an exponential estimate. Without a potential $\Phi$ satisfying
$\matM^{-1}=\Phi''$, the Bregman Lyapunov function used in the existing proofs is
unavailable. The weighted quadratic in the current metric then acquires a term in
$\dot{\matM}$ whose sign is not controlled by the standing assumptions.
Section~\ref{sec:stabIII} returns to this point, and
Section~\ref{sec:counterexample} constructs a $C^2$ metric for which the
metric-derivative term cancels the contraction on an annulus of periodic orbits.

Even when $\inner{F(x)}{x-\bar x}>0$, a positive definite matrix $B$ can satisfy
$\inner{BF(x)}{x-\bar x}=0$ at a prescribed point.

\begin{example}[Pointwise cancellation by a positive definite matrix]\label{ex:mechanism}
Let
\[
\setS=\{x\in\R^2:\|x\|\le1.1\},
\qquad
F(x)=(I+2J)x,
\qquad
J=\begin{pmatrix}0&-1\\1&0\end{pmatrix}.
\]
The matrix $J$ rotates vectors counterclockwise through $\pi/2$. The operator $F$
is $1$-strongly monotone in the sense of Assumption~\ref{ass:strong_monotone}, and
$\bar x=0$ is the unique solution. Write $e_1$ for
the first standard basis vector of $\R^2$. At $x=e_1$,
\[
F(e_1)=\begin{pmatrix}1\\2\end{pmatrix},
\qquad
B=\begin{pmatrix}1&-1/2\\-1/2&1/2\end{pmatrix}\succ0,
\qquad
BF(e_1)=\begin{pmatrix}0\\1/2\end{pmatrix}=\tfrac12Je_1 .
\]
\end{example}

Since $J^\top=-J$, one has $\inner{F(x)}{x}=\|x\|^2$ for every $x\ne0$. Thus the
Euclidean direction $-F(x)$ has a strictly negative radial component. At $x=e_1$,
however, $\inner{BF(e_1)}{e_1}=0$. For the choice $\alpha=0.05$ used in
Section~\ref{sec:counterexample}, the forward point has norm
$\sqrt{1+\alpha^2/4}<1.1$. The projection is inactive there, and
$\dot x=-(\lambda\alpha/2)Jx$ is tangent to the unit circle.

For a general $x=(x_1,x_2)$,
\[
\inner{BF(x)}{x}=x_2\Bigl(\tfrac32x_2-2x_1\Bigr).
\]
A fixed $B$ therefore cancels the radial component only on two lines. Moreover
$B(I+2J)$ has eigenvalues $3/4\pm i\sqrt{11}/4$, so the interior linear flow with
$B$ fixed is asymptotically stable. Section~\ref{sec:counterexample} instead sets
$\matM(x)=R(\theta)BR(\theta)^\top$, where $\theta$ is the polar angle of $x$ and
$R(\theta)$ is the rotation carrying $e_1$ to $x/\|x\|$. On the resulting annulus,
$\matM(x)F(x)=\tfrac12Jx$ at every point. Every trajectory starting on one of
those circles is periodic and does not converge to $\bar x$.

Discrete variable-metric methods allow free metrics, but their hypotheses do not
resolve the state-dependent continuous-time problem. Variable-metric
forward--backward splitting runs on
an arbitrary sequence of positive definite operators, with no potential anywhere,
under a summable bound on how far the metric may move between steps
\cite{Combettes2014}. A second route drops that bound and imposes a majorization
condition at the current iterate instead \cite{Chouzenoux2014}. For monotone
inclusions, a two-sided bound of the same summable kind, together with a
regularity condition at the solution, already yields a linear rate
\cite[Theorem~2.2]{Lotito2009}. These results provide metric freedom, variation
budgets and rates under such budgets in discrete time.

What that literature does not do is differentiate the metric with respect to the
state. In \cite{Combettes2014} the metric is an exogenous sequence, and a frozen
metric is the Hessian of a quadratic \cite[Remark~3.8]{Combettes2014}. In
\cite{Chouzenoux2014} it is tied to the iterate, but only through a pointwise
inequality that a constant multiple of the identity already satisfies
\cite[Lemma~3.1]{Chouzenoux2014}. In \cite{Lotito2009} the matrices are chosen
freely at each step, and the budget constrains consecutive terms of a sequence
rather than a derivative. A flow has no such option. The derivative of a weighted
energy along a trajectory
carries $\dot{\matM}$ whether the metric is integrable or not, and that term is
what the rest of this paper must control.

Making the metric depend on the state changes the structure of the problem, not
only its constants. The projection operator itself becomes state dependent, and
any Lyapunov argument must carry the time variation of the metric along the
trajectory. The analysis below separates what survives that change from what
does not. Well-posedness survives. Local Lipschitz continuity of $F$, closed
convexity of $\setS$, uniform bounds on the metric and Lipschitz regularity of
$\matM(\cdot)$ give a locally Lipschitz residual field and a unique viable local
solution, and forward completeness then needs only a growth or continuation
argument. Stability does not survive unchanged. Metric variation contributes a
term of indefinite sign to the derivative of a moving-metric energy. The standing
assumptions alone do not control that term and therefore do not imply global
convergence. A quantitative variation bound is one sufficient way to recover
global decay.

Within this model, Euclidean projection neural networks are the case
$\matM(x)\equiv I$. The classical projected dynamical system is not: it is the
tangent-cone flow, recovered only in the limit described above. Constant-metric
instances $\matM(x)\equiv M\succ0$ are continuous-time analogues of scaled
projection methods. Relative to the tangent-cone dynamics of
\cite{Hauswirth2020}, the metric here sits inside a finite forward step rather
than in a projection onto a cone. That form keeps an exact equivalence between
equilibria and solutions of $\mathrm{VI}(F,\setS)$ at every $\alpha>0$.

A word on the name. The SD-SPNN inherits the architecture of classical
projection neural networks, in which the dynamics are realized by units
implementing linear combinations, activations and projection blocks. In that
tradition ``neural network'' means an analog or neuromorphic realization of a
flow that solves an optimization or equilibrium problem in continuous time,
rather than a model trained on data. The state-dependent metric is then a
continuously updated gain in the circuit, changing how the residual
$x-P_{\setS,\matM(x)^{-1}}(x-\alpha \matM(x)F(x))$ is formed and fed back. All
results below are stated and proved at the level of dynamical systems, with no
learning and no data. Adjacent developments include further projection neural
networks for variational inequalities \cite{Jiang2012,Xu2021}, fixed-time and
predefined-time neurodynamic solvers
\cite{Yang2025,Zheng2024,Tran2025}, systems for generalized and inverse
variational inequalities \cite{Anh2025}, and networks analysed through the
variational inequality their asymptotic output solves
\cite{Combettes2020,Combettes2022,Zhao2025a}.

\paragraph{Contributions.}
\begin{itemize}
\item Joint regularity of the metric projection in its argument and in its
metric, with explicit constants, including a Euclidean Lipschitz constant $L$
rather than the $L^2$ available in the literature for the same class of matrices.
\item Local exponential convergence for a state-dependent metric that is not
assumed to be a Hessian or a multiple of the identity. We give an explicit rate
and certified radius, and obtain a global rate under a bound on metric variation.
\item A counterexample delimiting all of this. On a compact feasible set, with a
strongly monotone operator, a $C^2$ uniformly positive definite metric whose
eigenframe rotates, and $D\matM(\bar x)=0$, the flow has an annulus of periodic
orbits. The construction shows that the standing assumptions alone do not imply
global convergence. A metric-variation budget is one explicit sufficient condition
that excludes this behaviour.
\item A numerical study of the counterexample and of the two certificates.
Computed trajectories on the periodic annulus match the analytic orbits to
$4.3\times10^{-13}$ in radius and $3.5\times10^{-13}$ in phase. The integrability
residual is validated against metrics whose residual is known in closed form, and
the projection estimates of Section~\ref{sec:projreg} are evaluated at an active
constraint. The half-rate radius is $4.4661$ decades below the analytic boundary
between initial states that converge to $\bar x$ and periodic initial states,
measured in the weighted radius of that theorem. The explicit variation
certificate covers a band $7.1763$ times shorter than the true one. The
computations are deterministic.
\end{itemize}

The remainder of the paper is organized as follows.
Section~\ref{sec:notation} introduces notation, definitions and the standing
assumptions. Section~\ref{sec:projreg} establishes the regularity of the metric
projection in its argument and in its metric, which is what the later arguments
rest on. Section~\ref{sec:well-posed} establishes well-posedness, invariance of the
feasible set and the equilibrium characterization.
Section~\ref{sec:stability} develops the stability analysis in three stages,
first conditionally, then for a fixed metric, then for a genuinely
state-dependent one. Section~\ref{sec:counterexample} gives the counterexample that
delimits all of it, together with an integrable metric for contrast.
Section~\ref{sec:numerics} reports a numerical study of the counterexample, the
two explicit certificates and the projection estimates at an active constraint.
Section~\ref{sec:futurework} closes with limitations and directions for further
work.

\section{Formulation and Notation}\label{sec:notation}

Let $\setS \subset \R^n$ be a nonempty closed convex set and let
$F : \R^n \to \R^n$ be a continuous mapping. The variational inequality
problem associated with $(F,\setS)$, denoted by $\mathrm{VI}(F,\setS)$,
consists of finding a point $\bar x \in \setS$ such that
\begin{equation}\label{eq:VI}
    \inner{F(\bar x)}{x - \bar x} \ge 0,
    \quad \forall x \in \setS.
\end{equation}
We write $\operatorname{SOL}(F,\setS)$ for the set of such $\bar x$.

Throughout the paper, $\inner{\cdot}{\cdot}$ denotes the standard
Euclidean inner product on $\R^n$, with associated norm
$\|x\| = \sqrt{\inner{x}{x}}$.
Given a symmetric positive definite matrix $M \in \R^{n \times n}$,
we define the $M$--induced inner product and norm by
\[
\inner{x}{y}_M := x^T M y,
\qquad
\|x\|_M := \sqrt{\inner{x}{x}_M}.
\]

For matrices, $\|\cdot\|$ denotes the Euclidean operator norm, $I$ the identity
matrix, and $\lambda_{\min}(A)$ and $\lambda_{\max}(A)$ the extreme eigenvalues
of a symmetric matrix $A$. For symmetric matrices $A$ and $B$, the relation
$A\preceq B$ means that $B-A$ is positive semidefinite, while $A\succ0$ means
that $A$ is positive definite. We write $\diag(a_1,\dots,a_n)$ for the diagonal
matrix with those entries. Finally, $\overline B(x,r)$ denotes the closed
Euclidean ball, and
$\operatorname{dist}(x,\mathcal A):=\inf_{z\in\mathcal A}\|x-z\|$.

We consider a state-dependent metric generated by a mapping
$\matM : \R^n \to \R^{n \times n}$ satisfying
\[
\left[\matM(x)\right]^T = \matM(x),
\qquad
y^T \matM(x) y > 0 \quad \forall y \neq 0,
\]
for all $x \in \R^n$.
The corresponding inner product $\inner{\cdot}{\cdot}_{\matM(x)}$
and norm $\|\cdot\|_{\matM(x)}$ are referred to as
\emph{state-dependent}.

\subsection*{Metric projection}

For a fixed $x \in \R^n$, we define the projection onto $\setS$
with respect to the metric induced by $\matM(x)$ as
\begin{equation}\label{eq:metric_projection}
    P_{\setS,\matM(x)}(y)
    :=
    \argmin_{z \in \setS}
    \frac{1}{2}\|z - y\|^2_{\matM(x)}.
\end{equation}
Since $\matM(x)$ is symmetric positive definite and $\setS$ is closed
and convex, the minimizer in \eqref{eq:metric_projection} exists and
is unique. If $\matM(x)\equiv I$, then \eqref{eq:metric_projection} is the
Euclidean projection; if $\matM(x)\equiv M$ is constant, it is commonly called a
scaled projection \cite{Bonettini2009}.

The metric projection has the following variational characterization.
\begin{lemma}[Variational characterization of the metric projection]\label{lem:proj_char}
For given $x,y \in \R^n$,  let $z \in \setS$. Then
\begin{equation}\label{eq:proj_char}
    z = P_{\setS,\matM(x)}(y)
    \quad \Longleftrightarrow \quad
    (z - y)^T \matM(x)(w - z) \ge 0,
    \quad \forall w \in \setS.
\end{equation}
\end{lemma}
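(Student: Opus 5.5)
I will treat this as the first-order optimality condition for a strongly convex quadratic over a closed convex set, with $x$ frozen throughout. Fix $x$ and $y$, and write $M=\matM(x)$, which is symmetric positive definite. Set $\phi(z)=\tfrac12\|z-y\|_M^2=\tfrac12(z-y)^TM(z-y)$. Then $\phi$ is differentiable with $\nabla\phi(z)=M(z-y)$, using the symmetry of $M$. It is also strongly convex, since its Hessian is $M\succ0$. Both directions of \eqref{eq:proj_char} will come from expanding $\phi$ along segments in $\setS$.

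For the forward implication, let $z=P_{\setS,M}(y)$ and take any $w\in\setS$. For $t\in(0,1]$ the point $z_t=z+t(w-z)$ lies in $\setS$ by convexity, so $\phi(z_t)\ge\phi(z)$. Expanding the quadratic gives
\[
\phi(z_t)-\phi(z)=t\,(z-y)^TM(w-z)+\tfrac{t^2}{2}\|w-z\|_M^2 \ge 0 .
\]
I will divide by $t>0$ and let $t\downarrow0$, which yields $(z-y)^TM(w-z)\ge0$.

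For the reverse implication, suppose $z\in\setS$ satisfies the inequality for every $w\in\setS$. The same expansion with $t=1$ gives
\[
\phi(w)-\phi(z)=(z-y)^TM(w-z)+\tfrac12\|w-z\|_M^2 \ge 0 .
\]
So $z$ minimizes $\phi$ over $\setS$, and by the uniqueness of the minimizer noted after \eqref{eq:metric_projection} it follows that $z=P_{\setS,M}(y)$. The inequality is in fact strict whenever $w\ne z$, which independently confirms uniqueness.

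None of these steps is a real obstacle. The only points needing care are the exact algebraic identity $\phi(z+v)=\phi(z)+(z-y)^TMv+\tfrac12 v^TMv$, which requires $M$ symmetric, and the observation that $x$ enters only as a frozen parameter through $M$. Because of that, existence and uniqueness of the minimizer are taken from the remark after \eqref{eq:metric_projection} and do not need to be reproved.
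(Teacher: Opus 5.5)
Your proposal is correct and follows essentially the same route as the paper. Both prove the forward direction from the one-sided derivative of $\phi$ along the segment from $z$ to $w$ inside $\setS$; the paper writes this as $g'(0^+)\ge0$, while you expand the quadratic and let $t\downarrow0$. Both prove the reverse direction from the identity $\phi(w)-\phi(z)=(z-y)^TM(w-z)+\tfrac12\|w-z\|_M^2$, and your explicit appeal to uniqueness of the minimizer states a step the paper leaves implicit.
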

\begin{proof}
Let $f$ be the objective minimized in \eqref{eq:metric_projection},
\[
f(u)=\tfrac12 (u-y)^\top \matM(x)(u-y),
\]
so that $P_{\setS,\matM(x)}(y)=\argmin_{u\in\setS}f(u)$.
Then $f$ is convex and $\nabla f(u)=\matM(x)(u-y)$. If $z=P_{\setS,\matM(x)}(y)$, then for any $w\in\setS$ and $t\in[0,1]$,
$z+t(w-z)\in\setS$. Define $g(t)=f(z+t(w-z))$. Since $t=0$ is a
minimizer of $g$,
\[
0\le g'(0^+)=\nabla f(z)^\top (w-z)=(z-y)^\top \matM(x)(w-z).
\]

Conversely, suppose $(z-y)^\top \matM(x)(w-z)\ge 0$ for all $w\in\setS$. Then for any $w\in\setS$,
\[
\begin{aligned}
f(w)-f(z)
&=\tfrac12 (w-y)^\top \matM(x)(w-y)-\tfrac12 (z-y)^\top \matM(x)(z-y)\\
&=\tfrac12 (w-z)^\top \matM(x)(w-z)+(z-y)^\top \matM(x)(w-z)\ge 0.
\end{aligned}
\]
Hence $z$ minimizes $f$ over $\setS$, i.e., $z=P_{\setS,\matM(x)}(y)$.
\end{proof}

\subsection*{Standing assumptions}

The assumptions used below are collected here. Each result states those it
requires.

\begin{assumption}[Uniform metric boundedness]\label{ass:metric_bounds}
There exists a constant $L \ge 1$ such that
\[
\|\matM(x)\| \le L
\quad \text{and} \quad
\|\matM(x)^{-1}\| \le L,
\qquad \forall x \in \setS.
\]
\end{assumption}

Assumption~\ref{ass:metric_bounds} implies that the eigenvalues of
$\matM(x)$ lie in $[L^{-1},L]$ uniformly on $\setS$, and hence all norms
$\|\cdot\|_{\matM(x)}$ are uniformly equivalent to the Euclidean norm. In fact, we have 
\[
\frac{1}{\sqrt{L}}\|z\|\le \|z\|_{\matM(x)}\le \sqrt{L} \|z\|, \quad \forall~z\in \R^n.
\]

From Section~\ref{sec:well-posed} onwards the metric must be regular and not
merely bounded.

\begin{assumption}[Metric regularity]\label{ass:metric_regular}
The mapping $\matM(\cdot)$ is locally Lipschitz continuous on $\setS$.
\end{assumption}

The remaining assumptions quantify the regularity of $\matM$ and restrict $F$.
Assumption~\ref{ass:M_Lipschitz} is used from Section~\ref{sec:projreg} onwards
and Assumption~\ref{ass:F_Lipschitz} from Section~\ref{sec:well-posed} onwards.
The two monotonicity assumptions are used only in
Section~\ref{sec:stability}.

\begin{assumption}[Regularity of $F$]\label{ass:F_Lipschitz}
The mapping $F:\R^n\to\R^n$ is Lipschitz continuous on $\setS$
with constant $K>0$, i.e.
\[
\|F(x)-F(y)\| \le K\|x-y\|,
\qquad \forall x,y\in\setS.
\]
\end{assumption}

\begin{assumption}[Metric Lipschitz continuity]\label{ass:M_Lipschitz}
The metric mapping $\matM:\R^n\to\R^{n\times n}$ is Lipschitz
continuous on $\setS$ with constant $K_M\ge 0$:
\[
\|\matM(x)-\matM(y)\| \le K_M\|x-y\|,
\qquad \forall x,y\in\setS.
\]
\end{assumption}

\begin{assumption}[Monotonicity of $F$]\label{ass:monotone}
The mapping $F:\R^n\to\R^n$ is monotone on $\setS$, i.e.
\[
\inner{F(x)-F(y)}{x-y} \ge 0,
\qquad \forall x,y \in \setS.
\]
\end{assumption}

\begin{assumption}[Strong monotonicity]\label{ass:strong_monotone}
The mapping $F:\R^n\to\R^n$ is strongly monotone on $\setS$ with modulus
$\mu>0$, i.e.
\[
\inner{F(x)-F(y)}{x-y} \ge \mu\|x-y\|^2,
\qquad \forall x,y\in\setS.
\]
\end{assumption}

If $\setS$ contains two distinct points, Assumptions~\ref{ass:F_Lipschitz}
and~\ref{ass:strong_monotone} force $\mu\le K$. Together with $L\ge1$ this gives
$\mu\le LK$, which is used in Section~\ref{sec:stabIII} to ensure that a
contraction factor defined there is real.

\subsection*{State-dependent projection neural network}

For a given $\alpha>0$, define the state-dependent projected map
\begin{equation}\label{eq:Tmap}
T(x)
:=
P_{\setS,\matM(x)^{-1}}\bigl(x-\alpha \matM(x)F(x)\bigr),
\qquad x \in \setS.
\end{equation}
The map $T$ is one forward step along $-\matM(x)F(x)$, projected back onto $\setS$
in the metric $\matM(x)^{-1}$. Motivated by the fixed-metric projection neural
networks commonly used for solving variational inequalities, we study the
state-dependent projection neural network
\begin{equation}\label{eq:SD_dynamics}
    \frac{dx}{dt} = \lambda\bigl(T(x)-x\bigr),
\end{equation}
where $\lambda>0$ is a given parameter. When $\matM(x)=I$, system
\eqref{eq:SD_dynamics} is precisely the classical projected neural network. Its
equilibria are the fixed points of $T$, which
Proposition~\ref{prop:equilibrium_VI} identifies with the solutions of
$\mathrm{VI}(F,\setS)$.

\section{Regularity of the metric projection}\label{sec:projreg}

The map $T$ depends on the state through both the point being projected and the
metric in which the projection is taken. The projection estimates below use only
convexity and the spectral bounds of Assumption~\ref{ass:metric_bounds}. They
involve neither $F$ nor the differential equation, so
Section~\ref{sec:well-posed} may invoke them without circularity.
Corollary~\ref{cor:proj_constants} adds Assumption~\ref{ass:M_Lipschitz}, to bound
how fast the inverse metric moves.

Throughout, $Q,Q_1,Q_2$ denote symmetric positive definite matrices.

\begin{lemma}[Variation of the metric at a fixed point]
\label{lem:proj_metric_var}
Let $\setS$ be nonempty, closed and convex, and let $Q_1,Q_2\succ0$. Then for
every $z\in\R^n$,
\begin{equation}\label{eq:proj_metric_var}
\bigl\|P_{\setS,Q_1}(z)-P_{\setS,Q_2}(z)\bigr\|
\;\le\;
\frac{\|Q_1-Q_2\|}{\lambda_{\min}(Q_1)}\,
\bigl\|P_{\setS,Q_2}(z)-z\bigr\|.
\end{equation}
\end{lemma}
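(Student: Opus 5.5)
Write $p_1=P_{\setS,Q_1}(z)$ and $p_2=P_{\setS,Q_2}(z)$. The plan is to apply the variational characterization of Lemma~\ref{lem:proj_char} twice, once in each metric, with the other projection as the test point, and then add the two inequalities so that the difference $p_1-p_2$ appears in the $Q_1$-weighted quadratic form. (Lemma~\ref{lem:proj_char} is stated for $\matM(x)$, but its proof uses only symmetry and positive definiteness, so it applies verbatim to any $Q\succ0$.)

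Testing the characterization of $p_1$ with $w=p_2\in\setS$, and that of $p_2$ with $w=p_1\in\setS$, gives
\[
(p_1-z)^\top Q_1(p_2-p_1)\ge0,
\qquad
(p_2-z)^\top Q_2(p_1-p_2)\ge0 .
\]
In the second inequality I will write $Q_2=Q_1+(Q_2-Q_1)$ and then add the two. The $Q_1$ terms combine into $-(p_1-p_2)^\top Q_1(p_1-p_2)$, because
$(p_1-z)^\top Q_1(p_2-p_1)+(p_2-z)^\top Q_1(p_1-p_2)=-(p_1-p_2)^\top Q_1(p_1-p_2)$. This leaves
\[
(p_1-p_2)^\top Q_1(p_1-p_2)\;\le\;(p_2-z)^\top (Q_2-Q_1)(p_1-p_2).
\]

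The left side is at least $\lambda_{\min}(Q_1)\|p_1-p_2\|^2$. The right side is at most $\|Q_1-Q_2\|\,\|p_2-z\|\,\|p_1-p_2\|$ by Cauchy--Schwarz and the definition of the operator norm. If $p_1=p_2$ there is nothing to prove. Otherwise I divide by $\lambda_{\min}(Q_1)\|p_1-p_2\|$, which yields \eqref{eq:proj_metric_var} exactly.

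The only delicate point is choosing which metric to split. The error term must end up multiplying $p_2-z$ while the coercive term uses $Q_1$, to match the asymmetry of the stated bound. That is why I perturb $Q_2$ around $Q_1$ in the second inequality, and not the reverse. Beyond this bookkeeping I expect no real obstacle.
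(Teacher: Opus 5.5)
Your proof is correct and is essentially the paper's argument. Both test each projection's variational inequality at the other projection, add the two, and isolate the perturbation $(Q_2-Q_1)$ acting on $p_2-z$ so that the coercive term is $\inner{Q_1 d}{d}$ with $d=p_1-p_2$; both then finish with Cauchy--Schwarz and division by $\lambda_{\min}(Q_1)\|d\|$, including your observation that Lemma~\ref{lem:proj_char} holds for any symmetric $Q\succ0$.
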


\begin{proof}
Write $y_i:=P_{\setS,Q_i}(z)$ and $d:=y_1-y_2$. If $d=0$ the right-hand side of
\eqref{eq:proj_metric_var} is nonnegative and there is nothing to prove, so assume
$d\ne0$. By \eqref{eq:proj_char},
\[
\inner{Q_1(y_1-z)}{y_2-y_1}\ge0,
\qquad
\inner{Q_2(y_2-z)}{y_1-y_2}\ge0 .
\]
Adding the two inequalities gives
$\inner{Q_1(y_1-z)-Q_2(y_2-z)}{d}\le0$. Substituting
$Q_1(y_1-z)-Q_2(y_2-z)=Q_1d+(Q_1-Q_2)(y_2-z)$ yields
\[
\inner{Q_1 d}{d}
\;\le\;
-\inner{(Q_1-Q_2)(y_2-z)}{d}
\;\le\;
\|Q_1-Q_2\|\,\|y_2-z\|\,\|d\| .
\]
Since $\inner{Q_1d}{d}\ge\lambda_{\min}(Q_1)\|d\|^2$, dividing by
$\lambda_{\min}(Q_1)\|d\|>0$ gives \eqref{eq:proj_metric_var}.
\end{proof}

The estimate is anchored on the displacement $\|P_{\setS,Q_2}(z)-z\|$ rather than
on a diameter. This matters later, because that displacement is exactly what the
projection bound of Lemma~\ref{lem:displacement} controls. Note also that the
divisor $\lambda_{\min}(Q_1)$ and the anchor $P_{\setS,Q_2}$ carry opposite
indices, and that they may be exchanged only together.

\begin{lemma}[Variation of the point at a fixed metric]
\label{lem:proj_arg_var}
Let $\setS$ be nonempty, closed and convex and let $Q\succ0$. Then
\begin{equation}\label{eq:proj_arg_var}
\bigl\|P_{\setS,Q}(z_1)-P_{\setS,Q}(z_2)\bigr\|_Q\le\|z_1-z_2\|_Q,
\qquad
\bigl\|P_{\setS,Q}(z_1)-P_{\setS,Q}(z_2)\bigr\|
\le
\sqrt{\frac{\lambda_{\max}(Q)}{\lambda_{\min}(Q)}}\;\|z_1-z_2\| .
\end{equation}
\end{lemma}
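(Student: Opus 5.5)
The first inequality is nonexpansiveness of the projection in the inner product $\inner{\cdot}{\cdot}_Q$, and I will derive it from the variational characterization, Lemma~\ref{lem:proj_char}. That lemma is stated for $\matM(x)$, but its proof uses only that the matrix is symmetric positive definite, so it applies verbatim with $Q$ in place of $\matM(x)$.

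Set $y_i:=P_{\setS,Q}(z_i)$ for $i=1,2$ and $d:=y_1-y_2$. Applying \eqref{eq:proj_char} at $y_1$ with test point $w=y_2\in\setS$, and at $y_2$ with $w=y_1$, gives
\[
\inner{Q(y_1-z_1)}{y_2-y_1}\ge0,\qquad \inner{Q(y_2-z_2)}{y_1-y_2}\ge0 .
\]
Adding these yields $\inner{Q\bigl(d-(z_1-z_2)\bigr)}{d}\le0$, that is,
\[
\|d\|_Q^2\le\inner{z_1-z_2}{d}_Q\le\|z_1-z_2\|_Q\,\|d\|_Q ,
\]
where the last step is Cauchy--Schwarz in the $Q$-inner product. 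If $d=0$ there is nothing to prove; otherwise dividing by $\|d\|_Q>0$ gives the first estimate in \eqref{eq:proj_arg_var}.

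The Euclidean estimate then follows from norm equivalence. For any $v\in\R^n$ we have $\lambda_{\min}(Q)\|v\|^2\le\|v\|_Q^2\le\lambda_{\max}(Q)\|v\|^2$. Hence
\[
\|d\|\le\lambda_{\min}(Q)^{-1/2}\|d\|_Q\le\lambda_{\min}(Q)^{-1/2}\|z_1-z_2\|_Q\le\sqrt{\lambda_{\max}(Q)/\lambda_{\min}(Q)}\,\|z_1-z_2\| ,
\]
which is the second estimate.

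No step here is a real obstacle; the argument is the standard firm-nonexpansiveness computation in the weighted inner product. The only points needing care are applying Lemma~\ref{lem:proj_char} with a general $Q$, and using the correct orientation of the eigenvalue bounds when passing between norms, so that the constant is the square root of the condition number of $Q$.
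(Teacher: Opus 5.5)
Your proposal is correct and follows essentially the same route as the paper. You apply the variational characterization twice and add the results to get $\|d\|_Q^2\le\inner{z_1-z_2}{d}_Q\le\|z_1-z_2\|_Q\|d\|_Q$, and then you convert to the Euclidean bound using $\sqrt{\lambda_{\min}(Q)}\,\|d\|\le\|d\|_Q$ and $\|z_1-z_2\|_Q\le\sqrt{\lambda_{\max}(Q)}\,\|z_1-z_2\|$.
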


\begin{proof}
Put $y_i:=P_{\setS,Q}(z_i)$, $d:=y_1-y_2$ and $\delta:=z_1-z_2$. Using
\eqref{eq:proj_char} twice and adding,
\[
\inner{Q(y_1-z_1)-Q(y_2-z_2)}{y_2-y_1}\ge0,
\]
which rearranges to $\|d\|_Q^2\le\inner{\delta}{d}_Q\le\|\delta\|_Q\|d\|_Q$. This
gives the first inequality. The second follows from
$\sqrt{\lambda_{\min}(Q)}\,\|d\|\le\|d\|_Q$ and
$\|\delta\|_Q\le\sqrt{\lambda_{\max}(Q)}\,\|\delta\|$.
\end{proof}

\begin{proposition}[Joint regularity of the metric projection]
\label{prop:proj_joint}
Let $\setS$ be nonempty, closed and convex, and let $Q_1,Q_2\succ0$. Then for all
$z_1,z_2\in\R^n$,
\begin{equation}\label{eq:proj_joint}
\bigl\|P_{\setS,Q_1}(z_1)-P_{\setS,Q_2}(z_2)\bigr\|
\;\le\;
\sqrt{\frac{\lambda_{\max}(Q_1)}{\lambda_{\min}(Q_1)}}\;\|z_1-z_2\|
\;+\;
\frac{\|Q_1-Q_2\|}{\lambda_{\min}(Q_1)}\,
\bigl\|P_{\setS,Q_2}(z_2)-z_2\bigr\| .
\end{equation}
\end{proposition}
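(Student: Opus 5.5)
The plan is a triangle inequality through the intermediate point $P_{\setS,Q_1}(z_2)$, which has the metric of the first term and the argument of the second. Writing
\[
P_{\setS,Q_1}(z_1)-P_{\setS,Q_2}(z_2)
=\bigl(P_{\setS,Q_1}(z_1)-P_{\setS,Q_1}(z_2)\bigr)
+\bigl(P_{\setS,Q_1}(z_2)-P_{\setS,Q_2}(z_2)\bigr),
\]
each bracket changes only one of the two inputs. Each can then be bounded by one of the two preceding lemmas.

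The first bracket is a variation of the point at the fixed metric $Q_1$. The second inequality in \eqref{eq:proj_arg_var} of Lemma~\ref{lem:proj_arg_var}, applied with $Q=Q_1$, bounds its Euclidean norm by $\sqrt{\lambda_{\max}(Q_1)/\lambda_{\min}(Q_1)}\,\|z_1-z_2\|$. This is exactly the first term of \eqref{eq:proj_joint}.

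The second bracket is a variation of the metric at the fixed point $z=z_2$. Lemma~\ref{lem:proj_metric_var}, applied with $z=z_2$ in the order $(Q_1,Q_2)$ as stated, bounds it by $\|Q_1-Q_2\|\,\|P_{\setS,Q_2}(z_2)-z_2\|/\lambda_{\min}(Q_1)$. This is the second term of \eqref{eq:proj_joint}. Adding the two bounds gives the claim.

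The only delicate point is the choice of intermediate point, because the indices have to line up. The divisor $\lambda_{\min}(Q_1)$ and the condition number of $Q_1$ both refer to $Q_1$, while the anchor is $P_{\setS,Q_2}(z_2)$. Passing instead through $P_{\setS,Q_2}(z_1)$ would give the condition number of $Q_2$ and an anchor at $z_1$, which does not match the statement. With $P_{\setS,Q_1}(z_2)$ as the pivot, both lemmas apply verbatim and no further estimate is needed.
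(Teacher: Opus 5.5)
Your proposal is correct and is the paper's own proof. The paper uses the same pivot $P_{\setS,Q_1}(z_2)$, then applies Lemma~\ref{lem:proj_arg_var} with $Q=Q_1$ to the first difference and Lemma~\ref{lem:proj_metric_var} at $z=z_2$ to the second.
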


\begin{proof}
Insert the intermediate point $P_{\setS,Q_1}(z_2)$, which shares its metric with
the first term and its argument with the second, and apply
Lemma~\ref{lem:proj_arg_var} to
$\|P_{\setS,Q_1}(z_1)-P_{\setS,Q_1}(z_2)\|$ and
Lemma~\ref{lem:proj_metric_var} at $z=z_2$ to
$\|P_{\setS,Q_1}(z_2)-P_{\setS,Q_2}(z_2)\|$.
\end{proof}

\begin{remark}[The sharp constant]\label{rem:proj_constants}
The second constant in \eqref{eq:proj_arg_var} is not attained. Abbreviate
$\lambda_{\min}:=\lambda_{\min}(Q)$, $\lambda_{\max}:=\lambda_{\max}(Q)$ and
$\kappa:=\kappa_2(Q)=\lambda_{\max}/\lambda_{\min}$, the spectral condition number
of $Q$. The exact Euclidean modulus of
$P_{\setS,Q}$, taken over all nonempty closed convex $\setS$, is the Kantorovich
constant
\[
\gamma(Q)
:=\frac{\lambda_{\max}+\lambda_{\min}}{2\sqrt{\lambda_{\max}\lambda_{\min}}}
=\tfrac12\bigl(\sqrt{\kappa}+\kappa^{-1/2}\bigr).
\]
For the bound, keep $d$ and $\delta$ from the proof of
Lemma~\ref{lem:proj_arg_var} and start from $\|d\|_Q^2\le\inner{\delta}{d}_Q$
established there. Cauchy--Schwarz in the Euclidean norm gives
$\inner{\delta}{d}_Q=\inner{\delta}{Qd}\le\|\delta\|\,\|Qd\|$, so
\[
\frac{\|d\|}{\|\delta\|}\;\le\;\frac{\|d\|\,\|Qd\|}{\inner{Qd}{d}} .
\]
Every eigenvalue of $Q$ satisfies
$(\lambda-\lambda_{\min})(\lambda_{\max}-\lambda)\ge0$, so
$Q^2\preceq(\lambda_{\max}+\lambda_{\min})Q-\lambda_{\max}\lambda_{\min}I$.
Writing $s:=\inner{Qd}{d}/\|d\|^2\in[\lambda_{\min},\lambda_{\max}]$, this bounds
the right-hand side by
$\sqrt{(\lambda_{\max}+\lambda_{\min})s-\lambda_{\max}\lambda_{\min}}\,/\,s$,
which is largest at the harmonic mean
$s=2\lambda_{\max}\lambda_{\min}/(\lambda_{\max}+\lambda_{\min})$, where it equals
$\gamma(Q)$. The value is attained. Take $\setS=\R v$ a line, so that
$P_{\setS,Q}z=v\,\inner{Qz}{v}/\inner{Qv}{v}$ has Euclidean operator norm
$\|v\|\,\|Qv\|/\inner{Qv}{v}$, and choose the direction $v$ for which
$\inner{Qv}{v}/\|v\|^2$ is that harmonic mean.

The improvement is a factor $2\kappa/(\kappa+1)$. It is below two for every
$\kappa$ and approaches two as $\kappa\to\infty$. We keep the square-root form
because it is what the subsequent estimates combine with, and because the sharper
constant changes no conclusion below.
\end{remark}

Specializing to the metric of the dynamics gives the two constants used
throughout the paper.

\begin{corollary}[Constants under Assumption~\ref{ass:metric_bounds}]
\label{cor:proj_constants}
Let Assumption~\ref{ass:metric_bounds} hold and put $Q(x):=\matM(x)^{-1}$. Then
$\lambda_{\min}(Q(x))\ge L^{-1}$ and $\lambda_{\max}(Q(x))\le L$, so
\begin{equation}\label{eq:proj_L}
\bigl\|P_{\setS,Q(x)}(z_1)-P_{\setS,Q(x)}(z_2)\bigr\|\le L\,\|z_1-z_2\| .
\end{equation}
If in addition Assumption~\ref{ass:M_Lipschitz} holds, then
\begin{equation}\label{eq:inv_metric_var}
\bigl\|\matM(x)^{-1}-\matM(y)^{-1}\bigr\|\le L^2K_M\|x-y\|,
\qquad \forall x,y\in\setS .
\end{equation}
\end{corollary}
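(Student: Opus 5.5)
The plan is to read off both estimates directly from the spectral bounds of Assumption~\ref{ass:metric_bounds} and the earlier lemmas, with no new ideas needed.

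First, the eigenvalue bounds for $Q(x)=\matM(x)^{-1}$. Since $\matM(x)$ is symmetric positive definite, its eigenvalues $\mu_i$ satisfy $\mu_i\le\|\matM(x)\|\le L$ and $\mu_i^{-1}\le\|\matM(x)^{-1}\|\le L$, so every $\mu_i$ lies in $[L^{-1},L]$. The eigenvalues of $Q(x)$ are the reciprocals $\mu_i^{-1}$, and $[L^{-1},L]$ is invariant under inversion. This gives $\lambda_{\min}(Q(x))\ge L^{-1}$ and $\lambda_{\max}(Q(x))\le L$.

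Next, I will obtain \eqref{eq:proj_L} from the second inequality of Lemma~\ref{lem:proj_arg_var} with $Q=Q(x)$. The ratio satisfies $\lambda_{\max}(Q)/\lambda_{\min}(Q)\le L/L^{-1}=L^2$, so its square root is at most $L$. This is precisely where the improvement over the $L^2$ constant comes from: the norm-equivalence route, passing through $\|\cdot\|_Q$ with factors $\sqrt L$ on each side twice, would lose a power.

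For \eqref{eq:inv_metric_var}, I will use the resolvent identity
\[
\matM(x)^{-1}-\matM(y)^{-1}=\matM(x)^{-1}\bigl(\matM(y)-\matM(x)\bigr)\matM(y)^{-1}.
\]
Submultiplicativity of the operator norm, together with $\|\matM(\cdot)^{-1}\|\le L$ on $\setS$ applied at both $x$ and $y$, and Assumption~\ref{ass:M_Lipschitz}, then gives the bound $L\cdot K_M\|x-y\|\cdot L$.

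There is no real obstacle. The only point needing care is that both bounds are asserted for $x,y\in\setS$, since the assumptions hold only there. So the Lipschitz statement is restricted to $\setS$, and \eqref{eq:proj_L} to $x\in\setS$, while $z_1,z_2$ remain arbitrary in $\R^n$.
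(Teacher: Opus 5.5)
Your proposal is correct and follows the paper's proof step for step: the spectral bounds are read off from Assumption~\ref{ass:metric_bounds}, \eqref{eq:proj_L} comes from the second inequality of Lemma~\ref{lem:proj_arg_var} via $\lambda_{\max}(Q)/\lambda_{\min}(Q)\le L^2$, and \eqref{eq:inv_metric_var} comes from the resolvent identity with submultiplicativity. Your aside about where a power of $L$ would be lost is muddled but does not affect the proof: routing through $\|\cdot\|_Q$ is exactly what gives $L$, with one factor $\sqrt L$ on each side, and the $L^2$ constant arises from bounding in the Euclidean norm throughout, as Remark~\ref{rem:proj_constant_comparison} explains.
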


\begin{proof}
The spectral bounds are immediate from Assumption~\ref{ass:metric_bounds}, and
\eqref{eq:proj_L} follows from \eqref{eq:proj_arg_var} since
$\lambda_{\max}(Q)/\lambda_{\min}(Q)\le L^{2}$. For \eqref{eq:inv_metric_var} use
\[
\matM(x)^{-1}-\matM(y)^{-1}
=\matM(x)^{-1}\bigl(\matM(y)-\matM(x)\bigr)\matM(y)^{-1}
\]
together with Assumptions~\ref{ass:metric_bounds} and \ref{ass:M_Lipschitz}.
\end{proof}

\begin{remark}\label{rem:proj_constant_comparison}
The constant in \eqref{eq:proj_L} is $L$ and not $L^2$. Bonettini, Zanella and
Zanni prove the same Lipschitz bound with constant $L^2$, for the same class of
matrices, namely those with $\|D\|\le L$ and $\|D^{-1}\|\le L$, which is
Assumption~\ref{ass:metric_bounds} \cite[Lemma 2.1]{Bonettini2009}. The saving
comes from where the norms are converted. Bounding in the Euclidean norm
throughout costs $\|D\|$ at one end and $\|D^{-1}\|$ at the other, whereas
\eqref{eq:proj_arg_var} is an exact nonexpansiveness statement in $\|\cdot\|_Q$,
so only the single conversion $\sqrt{\lambda_{\max}(Q)/\lambda_{\min}(Q)}$ is
paid. Neither constant is sharp. By Remark~\ref{rem:proj_constants} the exact
modulus is at most $\tfrac12(L+L^{-1})$ under Assumption~\ref{ass:metric_bounds},
and that value is attained on a line when the spectral bounds are tight. We keep
$L$ because it is what the later estimates combine with.
The constant in \eqref{eq:inv_metric_var} is $L^2K_M$, and its exponent cannot be
lowered. In one dimension take $L>1$, $m_1=L^{-1}$ and $m_2=L^{-1}+\varepsilon$
with $0<\varepsilon<L-L^{-1}$, so that both lie in $[L^{-1},L]$. Then
\[
\frac{|m_1^{-1}-m_2^{-1}|}{|m_1-m_2|}=\frac{1}{m_1m_2}
=\frac{L^2}{1+\varepsilon L}\longrightarrow L^2
\qquad\text{as }\varepsilon\downarrow0 ,
\]
so both metric values approach the lower spectral bound and both inverse factors
approach $L$.
\end{remark}

\section{Well-posedness and Equilibria}\label{sec:well-posed}

In this section, we establish basic properties of the
state-dependent projection dynamical system
\eqref{eq:SD_dynamics}, including existence and uniqueness
of trajectories and a precise characterization of its equilibria. The right-hand
side of \eqref{eq:SD_dynamics} is $\lambda(T(x)-x)$ with $T$ as in
\eqref{eq:Tmap}, so every statement below is a statement about $T$.

\begin{lemma}[Continuity of the right-hand side]\label{lem:T_cont}
Let Assumptions~\ref{ass:metric_bounds} and~\ref{ass:metric_regular} hold, and let
$F$ be continuous on $\setS$. Then $T:\setS\to\setS$ is continuous.
\end{lemma}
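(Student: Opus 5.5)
Fix $x_0\in\setS$ and a sequence $x_k\to x_0$ in $\setS$. We show $T(x_k)\to T(x_0)$, using the joint estimate of Proposition~\ref{prop:proj_joint} with $Q_1=Q(x_k):=\matM(x_k)^{-1}$, $Q_2=Q(x_0):=\matM(x_0)^{-1}$, $z_1=z(x_k)$ and $z_2=z(x_0)$, where $z(x):=x-\alpha\matM(x)F(x)$ is the forward point. This gives
\[
\|T(x_k)-T(x_0)\|\le \sqrt{\tfrac{\lambda_{\max}(Q(x_k))}{\lambda_{\min}(Q(x_k))}}\,\|z(x_k)-z(x_0)\|+\frac{\|Q(x_k)-Q(x_0)\|}{\lambda_{\min}(Q(x_k))}\,\bigl\|T(x_0)-z(x_0)\bigr\|.
\]
By Corollary~\ref{cor:proj_constants}, Assumption~\ref{ass:metric_bounds} bounds the first prefactor by $L$ and $1/\lambda_{\min}(Q(x_k))$ by $L$. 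These bounds are uniform in $k$. The quantity $\|T(x_0)-z(x_0)\|$ is a fixed finite number. So it suffices to show that $z(x_k)\to z(x_0)$ and $Q(x_k)\to Q(x_0)$.

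The first limit follows from the splitting
\[
\matM(x_k)F(x_k)-\matM(x_0)F(x_0)=\matM(x_k)\bigl(F(x_k)-F(x_0)\bigr)+\bigl(\matM(x_k)-\matM(x_0)\bigr)F(x_0).
\]
The first term is at most $L\|F(x_k)-F(x_0)\|\to0$ by continuity of $F$ on $\setS$. The second tends to zero because $\matM$ is locally Lipschitz on $\setS$ (Assumption~\ref{ass:metric_regular}), hence continuous at $x_0$. For the second limit, use the resolvent identity from the proof of Corollary~\ref{cor:proj_constants}:
\[
\|\matM(x_k)^{-1}-\matM(x_0)^{-1}\|\le L^2\|\matM(x_k)-\matM(x_0)\|\to0.
\]
This needs only local continuity of $\matM$, not the global Lipschitz Assumption~\ref{ass:M_Lipschitz}.

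It remains to note that $T$ maps into $\setS$, since every metric projection onto $\setS$ lands in $\setS$ by \eqref{eq:metric_projection}.

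The only delicate point is applying Proposition~\ref{prop:proj_joint} in the right orientation. The divisor $\lambda_{\min}(Q_1)$ belongs to the varying metric, where Assumption~\ref{ass:metric_bounds} controls it uniformly. The displacement anchor belongs to the fixed point $x_0$, so it is a constant. With that orientation no compactness or boundedness of $\setS$ is needed, and the argument is routine.
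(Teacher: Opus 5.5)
Your proposal is correct and matches the paper's proof: the paper also applies Proposition~\ref{prop:proj_joint} with the varying metric $\matM(x_1)^{-1}$ as $Q_1$ and the anchor $\|T(x_2)-y(x_2)\|$ fixed at the base point. It bounds the two prefactors by $L$ using \eqref{eq:proj_L} and $\lambda_{\min}\ge L^{-1}$, and concludes from continuity of the forward point and of $x\mapsto\matM(x)^{-1}$. Your explicit splitting of $\matM F$ and the resolvent bound with constant $L^2$ only spell out details the paper states in one line.
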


\begin{proof}
By Assumption~\ref{ass:metric_regular} the mapping $x\mapsto\matM(x)$ is locally
Lipschitz on $\setS$, hence continuous, and so is $x\mapsto\matM(x)^{-1}$ under
Assumption~\ref{ass:metric_bounds}. Since $F$ is continuous, the forward point
$y(x):=x-\alpha\matM(x)F(x)$ depends continuously on $x$.

Fix $x_2\in\setS$ and apply Proposition~\ref{prop:proj_joint} with
$Q_i=\matM(x_i)^{-1}$ and $z_i=y(x_i)$. Its first term is at most
$L\|y(x_1)-y(x_2)\|$ by \eqref{eq:proj_L}, and its second is at most
$L\,\|\matM(x_1)^{-1}-\matM(x_2)^{-1}\|$ times the fixed finite number
$\|T(x_2)-y(x_2)\|$, using $\lambda_{\min}(\matM(x_1)^{-1})\ge L^{-1}$. Both
vanish as $x_1\to x_2$, so $T$ is continuous at $x_2$. Only the anchor at $x_2$
enters, so no bound on the displacement is needed.
\end{proof}

Two elementary bounds are recorded next. Both follow from the single observation
that $x$ itself is an admissible comparator in the projection defining $T(x)$.

\begin{lemma}[Projection displacement and descent]\label{lem:displacement}
Let Assumption~\ref{ass:metric_bounds} hold. Then for every $x\in\setS$,
\begin{equation}\label{eq:displacement}
\|T(x)-x\|_{\matM(x)^{-1}}\le\alpha\|F(x)\|_{\matM(x)},
\qquad
\|T(x)-x\|\le\alpha L\|F(x)\| ,
\end{equation}
and
\begin{equation}\label{eq:descent}
\inner{F(x)}{T(x)-x}
\le-\frac{1}{\alpha}\|T(x)-x\|^2_{\matM(x)^{-1}}
\le-\frac{1}{\alpha L}\|T(x)-x\|^2 .
\end{equation}
The second bound in \eqref{eq:displacement} is attained, for instance when
$\setS=\R^n$ and $\matM\equiv LI$.
\end{lemma}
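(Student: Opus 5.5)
The plan is to use the variational characterization of Lemma~\ref{lem:proj_char} with the comparator $w=x\in\setS$. Fix $x\in\setS$. Write $Q:=\matM(x)^{-1}$, $y:=x-\alpha\matM(x)F(x)$ and $z:=T(x)=P_{\setS,Q}(y)$. Lemma~\ref{lem:proj_char} with metric $Q$ then gives
\[
(z-y)^\top Q(x-z)\ge0 .
\]
Substituting $z-y=(z-x)+\alpha\matM(x)F(x)$ and using $Q\matM(x)=I$, this becomes
\[
-\|z-x\|_Q^2+\alpha\inner{F(x)}{x-z}\ge0,
\]
that is, $\inner{F(x)}{z-x}\le-\alpha^{-1}\|z-x\|_Q^2$. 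This is the first inequality in \eqref{eq:descent}. The second follows from $\lambda_{\min}(Q)\ge L^{-1}$, as recorded in Corollary~\ref{cor:proj_constants}, which gives $\|d\|_Q^2\ge L^{-1}\|d\|^2$.

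For \eqref{eq:displacement}, I would start from the same inequality
\[
\|z-x\|_Q^2\le\alpha\inner{F(x)}{x-z} .
\]
The key step is to write the right-hand side in the $Q$-inner product as
\[
\alpha\inner{\matM(x)F(x)}{x-z}_Q ,
\]
again using $Q\matM(x)=I$. Cauchy--Schwarz in $\|\cdot\|_Q$ then bounds it by $\alpha\|\matM(x)F(x)\|_Q\|z-x\|_Q$. Since
\[
\|\matM(x)F(x)\|_Q^2=F(x)^\top\matM(x)QM(x)F(x)=\|F(x)\|_{\matM(x)}^2 ,
\]
dividing by $\|z-x\|_Q$ (the case $z=x$ being trivial) gives the first bound. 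An equivalent route is the nonexpansiveness in Lemma~\ref{lem:proj_arg_var} applied to $z_1=y$ and $z_2=x=P_{\setS,Q}(x)$.

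For the Euclidean bound, convert both sides of the first bound. On the left, $\|z-x\|\le\sqrt{\lambda_{\max}(\matM(x))}\,\|z-x\|_Q$. On the right, $\|F(x)\|_{\matM(x)}\le\sqrt{\lambda_{\max}(\matM(x))}\,\|F(x)\|$. Each factor is at most $\sqrt L$, so the product gives the constant $\alpha L$.

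The attainment claim is a direct check. With $\setS=\R^n$ the projection is the identity, so $T(x)-x=-\alpha LF(x)$ and the norm is exactly $\alpha L\|F(x)\|$.

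The only real care point is the norm conversion in the Euclidean bound. Converting naively through $\lambda_{\min}(Q)$ would cost an extra factor of $L$. Using $\lambda_{\max}(\matM(x))$ on both sides keeps the constant at $L$, and this matches the claimed sharpness.
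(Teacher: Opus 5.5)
Your proof is correct and essentially the paper's. The descent inequality comes from Lemma~\ref{lem:proj_char} with comparator $w=x$ exactly as you derive it. For the displacement bound the paper takes the alternative you mention, namely Lemma~\ref{lem:proj_arg_var} together with $P_{\setS,Q}(x)=x$ and $\matM Q\matM=\matM$; your Cauchy--Schwarz step on $\inner{\matM(x)F(x)}{x-z}_Q$ is a one-line variant of it.

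One correction to your closing remark: since $\lambda_{\min}(Q)=1/\lambda_{\max}(\matM(x))$, converting through $\lambda_{\min}(Q)$ is the same conversion you actually use. The extra factor of $L$ appears only if $\|\matM(x)F(x)\|_Q$ is bounded by converting through $\lambda_{\max}(Q)$ and $\|\matM(x)\|$ separately.
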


\begin{proof}
Write $Q:=\matM(x)^{-1}$ and $y:=x-\alpha\matM(x)F(x)$, so that $T(x)=P_{\setS,Q}(y)$.
Since $x\in\setS$ we have $P_{\setS,Q}(x)=x$, and
Lemma~\ref{lem:proj_arg_var} gives
$\|T(x)-x\|_Q\le\|y-x\|_Q=\alpha\|\matM(x)F(x)\|_Q=\alpha\|F(x)\|_{\matM(x)}$,
using $\matM Q\matM=\matM$. Two applications of
Assumption~\ref{ass:metric_bounds} convert this to the Euclidean bound.

For \eqref{eq:descent}, apply \eqref{eq:proj_char} with $w=x$ and use
$Q\matM(x)=I$ to obtain
$\inner{Q(T(x)-x)+\alpha F(x)}{x-T(x)}\ge0$, which rearranges to
$\alpha\inner{F(x)}{T(x)-x}\le-\|T(x)-x\|_Q^2$.
\end{proof}

\begin{remark}\label{rem:descent_known}
At $\matM\equiv I$, $\lambda=1$ and $F=\nabla\varphi$ the first inequality in
\eqref{eq:descent} is the descent estimate
$\alpha\frac{d}{dt}\varphi(x(t))+\|\dot x(t)\|^2\le0$ of \cite{Bolte2003}, and
the derivation is the same one, namely taking $x$ itself as the comparator in the
projection inequality.
\end{remark}

\begin{lemma}[Local existence and viability]\label{lem:existence}
Under the assumptions of Lemma~\ref{lem:T_cont}, for every $x_0\in\setS$ there
exist $\tau>0$ and an absolutely continuous $x:[0,\tau]\to\setS$ with $x(0)=x_0$
solving \eqref{eq:SD_dynamics}.
\end{lemma}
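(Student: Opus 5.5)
The plan is to reduce the statement to a Peano existence argument for a continuous field on a closed convex set, combined with a viability argument that uses the convex structure of the field $\lambda(T(x)-x)$. The key observation is that $T$ maps $\setS$ into $\setS$ and is continuous by Lemma~\ref{lem:T_cont}. The field $f(x):=\lambda(T(x)-x)$ therefore points from $x$ toward a point of $\setS$. Since $\setS$ is convex, $x+h f(x)=(1-\lambda h)x+\lambda h\,T(x)\in\setS$ for every $h\in[0,1/\lambda]$. This tangency condition is what makes viability essentially free.

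First, extend $f$ continuously to all of $\R^n$ by setting $\tilde f(x):=f(P_{\setS}(x))$, using the Euclidean projection $P_{\setS}$, which is $1$-Lipschitz. On the compact set $\setS\cap\overline B(x_0,1)$, $\tilde f$ is bounded by some $C$, and $\tilde f$ is bounded by the same $C$ on $\overline B(x_0,1)$ because $P_{\setS}$ maps that ball into $\setS\cap\overline B(x_0,2)$. I would therefore take the compact set with radius $2$ to be safe. Peano's theorem then gives a $C^1$ solution of $\dot x=\tilde f(x)$ on $[0,\tau]$ with $\tau=\min(1/C,\,\cdot)$ that stays in the ball.

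It remains to show that this solution stays in $\setS$, so that $\tilde f=f$ along it and it solves \eqref{eq:SD_dynamics}. Rather than use an abstract viability theorem, I would argue directly with an explicit Euler scheme. Fix a step $h\le 1/\lambda$, set $x_{k+1}=x_k+h f(x_k)=(1-\lambda h)x_k+\lambda h T(x_k)$, and interpolate linearly. Every node lies in $\setS$ by convexity, and so does every segment between nodes. The polygonal paths are uniformly Lipschitz with constant $C$ on $[0,\tau]$ as long as they remain in the ball, which holds for $\tau\le 1/C$. By Arzel\`a--Ascoli a subsequence converges uniformly to some $x(\cdot)$ with values in the closed set $\setS$. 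Uniform continuity of $f$ on the compact set $\setS\cap\overline B(x_0,2)$ lets one pass to the limit in the integral form $x_h(t)=x_0+\int_0^t f(x_h(\lfloor s/h\rfloor h))\,ds$, giving $x(t)=x_0+\int_0^t f(x(s))\,ds$. The limit is therefore $C^1$, hence absolutely continuous, and it solves the equation with $x(t)\in\setS$. This direct Euler route makes the extension step unnecessary, and I would use it as the actual proof.

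I expect the main obstacle to be the bookkeeping that keeps the Euler polygons inside a fixed compact set where $f$ is bounded and uniformly continuous. $\setS$ may be unbounded, and $F$ is only continuous, so no global bound is available. I would handle this by fixing $R=1$, letting $C:=\max\{\|f(x)\|: x\in\setS\cap\overline B(x_0,1)\}$, which is finite by continuity and compactness, and taking $\tau:=\min\{1/C,1/\lambda\}$ (any $\tau$ if $C=0$). An induction on $k$ then shows $\|x_k-x_0\|\le Ckh\le 1$ for $kh\le\tau$. Uniqueness is not claimed here; it would follow later from local Lipschitz continuity of $T$ via Proposition~\ref{prop:proj_joint} and Corollary~\ref{cor:proj_constants}.
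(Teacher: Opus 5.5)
Your proof is correct, but it takes a different route from the paper. The paper rewrites \eqref{eq:SD_dynamics} in the variation-of-constants form \eqref{eq:varconst}. The weights $e^{-\lambda t}$ and $\lambda e^{-\lambda(t-s)}$ are nonnegative with total mass one, so the integral operator sends $\setS$-valued curves near $x_0$ to $\setS$-valued curves. The paper then takes a fixed point, by Banach's theorem when $T$ is Lipschitz and by Arzel\`a--Ascoli plus Schauder when $F$ is merely continuous. You instead run the Cauchy--Peano Euler-polygon construction inside $\setS$. The restriction $\lambda h\le1$ makes each step $x_{k+1}=(1-\lambda h)x_k+\lambda hT(x_k)$ a convex combination of two points of $\setS$, which is the discrete counterpart of the paper's exponential weights, and closedness of $\setS$ carries viability to the uniform limit. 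Your route is more elementary: it needs only Arzel\`a--Ascoli and uniform continuity of $T$ on $\setS\cap\overline B(x_0,1)$, with no Schauder theorem and no continuity check for an integral operator. It never evaluates $T$ off $\setS$, and it is constructive. The paper's route treats the Lipschitz and merely continuous cases in one framework. In the Lipschitz case the contraction also gives existence and uniqueness among nearby curves in a single step. You can drop the extension $\tilde f=f\circ P_{\setS}$ in your second paragraph. Had you kept it, it would also have worked, and it would answer the paper's objection that Peano's theorem needs an open domain. For the Peano solution, $\tfrac{d}{dt}\tfrac12\operatorname{dist}(x,\setS)^2=\lambda\inner{x-P_{\setS}x}{T(P_{\setS}x)-P_{\setS}x}\le0$ by the characterization of the Euclidean projection, so that solution never leaves $\setS$.
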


\begin{proof}
The classical Peano theorem is stated for a vector field continuous on an open
subset of $\R^n$, whereas the right-hand side of \eqref{eq:SD_dynamics} is defined
only on $\setS$. At a boundary point of $\setS$ no ambient neighbourhood is
available, and when $\operatorname{int}\setS=\varnothing$ none is available at any
point. We argue instead through the variation-of-constants form of
\eqref{eq:SD_dynamics}. A locally absolutely
continuous $x$ with $x(0)=x_0$ solves \eqref{eq:SD_dynamics} if and only if
\begin{equation}\label{eq:varconst}
x(t)=e^{-\lambda t}x_0+\int_0^t\lambda e^{-\lambda(t-s)}T(x(s))\,ds ,
\end{equation}
as one checks by multiplying \eqref{eq:SD_dynamics} by $e^{\lambda t}$ and
integrating.

Fix $r>0$ and let $B_r:=\setS\cap\overline B(x_0,r)$, which is compact. Put
$N_r:=\max_{x\in B_r}\|T(x)-x_0\|$, finite by continuity of $T$ on $B_r$, and let
$\mathcal C$ be the set of continuous $u:[0,\tau]\to\setS$ with $u(0)=x_0$ and
$\|u-x_0\|_\infty\le r$. The set $\mathcal C$ is nonempty, closed, bounded and
convex. Let $A$ denote the right-hand side of \eqref{eq:varconst} with $u$ in
place of $x$. Since
\[
e^{-\lambda t}+\int_0^t\lambda e^{-\lambda(t-s)}\,ds=1
\]
with both weights nonnegative, $(Au)(t)$ is a convex combination of $x_0$ and of
points $T(u(s))\in\setS$, so $(Au)(t)\in\setS$ by convexity and closedness of
$\setS$. Moreover $\|(Au)(t)-x_0\|\le(1-e^{-\lambda\tau})N_r$, so $A$ maps
$\mathcal C$ into itself once $(1-e^{-\lambda\tau})N_r\le r$. If $T$ is Lipschitz
on $B_r$ with constant $\ell_r$, then
$\|Au-Av\|_\infty\le(1-e^{-\lambda\tau})\ell_r\|u-v\|_\infty$, so $A$ is a
contraction for $\tau$ small and Banach's theorem applies. If $F$ is merely
continuous, $A(\mathcal C)$ is equi-Lipschitz by \eqref{eq:varconst} and
Arzel\`a--Ascoli together with Schauder's theorem give a fixed point instead.
Either way the fixed point solves \eqref{eq:SD_dynamics}.

Viability is therefore not an extra condition imposed on the fixed point. It is
forced by \eqref{eq:varconst}, which writes $x(t)$ as a convex combination of
$x_0$ and values of $T$, all of which lie in $\setS$. No tangency condition on the
right-hand side is needed.
\end{proof}

\begin{remark}\label{rem:existence_known}
For $\setS$ closed convex in a Hilbert space, $\matM\equiv I$ and
$F=\nabla\varphi$, Bolte obtains existence and uniqueness from the
Cauchy--Lipschitz theorem and derives viability from the same
variation-of-constants representation used here \cite[Theorem 2.1]{Bolte2003}.
In the present setting, Assumptions~\ref{ass:metric_bounds} and
\ref{ass:metric_regular} are stated only on $\setS$, so the fixed-point argument
above supplies existence as well as invariance. The invariance step itself uses
only that $T$ maps $\setS$ into $\setS$.
\end{remark}

\begin{lemma}[Uniqueness and forward completeness]\label{lem:unique}
Let Assumptions~\ref{ass:metric_bounds} and~\ref{ass:metric_regular} hold, and let
$F$ be locally Lipschitz on $\setS$. Then the solution of \eqref{eq:SD_dynamics}
from any $x_0\in\setS$ is unique. If in addition
Assumption~\ref{ass:F_Lipschitz} holds, that solution exists for all $t\ge0$.
\end{lemma}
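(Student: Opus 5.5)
The plan has two parts. First we show that $T$ is locally Lipschitz on $\setS$, which yields uniqueness through a Gronwall argument. Then we obtain a linear growth bound on $T(x)-x$ under Assumption~\ref{ass:F_Lipschitz}, which rules out finite-time escape.

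\emph{Local Lipschitz continuity of $T$.} Fix a compact set $B=\setS\cap\overline B(x_0,r)$. On $B$ the maps $\matM$ and $F$ are Lipschitz, with constants $K_{M,B}$ and $K_B$ say, and both are bounded. Write $y(x):=x-\alpha\matM(x)F(x)$. Then
\[
y(x_1)-y(x_2)=(x_1-x_2)-\alpha(\matM(x_1)-\matM(x_2))F(x_1)-\alpha\matM(x_2)(F(x_1)-F(x_2)),
\]
so $y$ is Lipschitz on $B$ with constant $1+\alpha K_{M,B}\sup_B\|F\|+\alpha LK_B$. We apply Proposition~\ref{prop:proj_joint} with $Q_i=\matM(x_i)^{-1}$ and $z_i=y(x_i)$. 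The first term is at most $L\|y(x_1)-y(x_2)\|$ by \eqref{eq:proj_L}.

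The second term carries the factor $L\|\matM(x_1)^{-1}-\matM(x_2)^{-1}\|\le L^3K_{M,B}\|x_1-x_2\|$, where we use the factorisation in the proof of Corollary~\ref{cor:proj_constants}. This factor multiplies the anchor $\|T(x_2)-y(x_2)\|$. The anchor is at most $\|T(x_2)-x_2\|+\alpha L\|F(x_2)\|\le 2\alpha L\|F(x_2)\|$ by Lemma~\ref{lem:displacement}, so it is bounded on $B$. Hence $T$ is Lipschitz on $B$.

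\emph{Uniqueness.} Let $x$ and $\tilde x$ be two solutions from $x_0$. Both stay in $\setS$, by Lemma~\ref{lem:existence} and the viability argument via \eqref{eq:varconst}. On any compact time interval both lie in a common compact set $B$, on which the field $\lambda(T(x)-x)$ is Lipschitz with constant $\lambda(\ell_B+1)$. Gronwall's inequality applied to $\|x(t)-\tilde x(t)\|$ then gives $x\equiv\tilde x$.

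\emph{Forward completeness.} This is the main step: we must rule out blow-up when $\setS$ is unbounded, and local Lipschitz continuity alone does not do that. We rely on the displacement bound rather than on a Lipschitz constant of $T$, since the latter may grow with $\|x\|$ through the factor $K_{M,B}\sup_B\|F\|$. Fix $x^\ast\in\setS$. For $x\in\setS$, Lemma~\ref{lem:displacement} and Assumption~\ref{ass:F_Lipschitz} give
\[
\|\dot x\|=\lambda\|T(x)-x\|\le\lambda\alpha L\|F(x)\|\le\lambda\alpha L\bigl(\|F(x^\ast)\|+K\|x-x^\ast\|\bigr).
\]
This is a linear growth bound. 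Set $\phi(t)=\|x(t)-x^\ast\|$. Then $\phi$ is absolutely continuous and $\phi'\le\|\dot x\|$ almost everywhere, so Gronwall gives $\phi(t)\le(\phi(0)+t\lambda\alpha L\|F(x^\ast)\|)e^{\lambda\alpha LKt}$, which is finite on every bounded interval.

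Now suppose the maximal solution lives on $[0,\tau^\ast)$ with $\tau^\ast<\infty$. The trajectory stays in a compact set $\setS\cap\overline B(x^\ast,R)$, and $\dot x$ is bounded there, so $x(t)$ is Cauchy as $t\uparrow\tau^\ast$. Its limit lies in $\setS$ because $\setS$ is closed. Restarting from that limit by Lemma~\ref{lem:existence} extends the solution beyond $\tau^\ast$, and uniqueness lets us glue the pieces. This contradicts maximality, so the solution exists for all $t\ge0$.
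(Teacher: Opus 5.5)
Your proposal is correct and follows the paper's proof essentially step for step. You show Lipschitz continuity of $T$ on compact subsets of $\setS$ via Proposition~\ref{prop:proj_joint}, \eqref{eq:proj_L}, the inverse-metric factorization and Lemma~\ref{lem:displacement} for the anchor, deduce uniqueness by Gr\"onwall, and exclude a finite maximal time through the linear growth bound and a restart by Lemma~\ref{lem:existence} at the limit point; your explicit anchor estimate $\|T(x_2)-y(x_2)\|\le2\alpha L\|F(x_2)\|$ is, if anything, stated more precisely than the paper's, which writes the anchor as $\|T(z)-z\|$.
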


\begin{proof}
Let $x,y$ be two solutions on a common compact interval, and let
$\mathcal K\subset\setS$ be a compact set containing both ranges.
Assumption~\ref{ass:metric_regular} gives a Lipschitz constant for $\matM$ on
$\mathcal K$, so \eqref{eq:inv_metric_var} holds on $\mathcal K$ with that constant
in place of $K_M$. The anchor $\|T(z)-z\|$ is bounded on $\mathcal K$ by
\eqref{eq:displacement}, since $F$ is continuous there. Hence
Proposition~\ref{prop:proj_joint} and \eqref{eq:proj_L} make $T$ Lipschitz on
$\mathcal K$ with some constant $\ell$, and the right-hand side of
\eqref{eq:SD_dynamics} is
Lipschitz with constant $\lambda(\ell+1)$. Gr\"onwall's inequality applied to
$t\mapsto\|x(t)-y(t)\|$ gives $x\equiv y$. Convexity of $\setS$ enters only through
the projection estimates of Section~\ref{sec:projreg}. The Gr\"onwall step itself
does not use it.

For completeness, note that local Lipschitz continuity alone does not exclude
finite-time escape. Under Assumption~\ref{ass:F_Lipschitz}, however, $F$ has linear
growth, $\|F(x)\|\le\|F(x_r)\|+K\|x-x_r\|$ for any fixed $x_r\in\setS$, so
\eqref{eq:displacement} gives $\|\dot x\|\le\lambda\alpha L\|F(x)\|\le a+b\|x\|$
for constants $a,b$ depending only on $\lambda,\alpha,L,K$ and $x_r$. Suppose the
maximal solution had $t_{\max}<\infty$. Gr\"onwall then bounds $\|x(t)\|$ on
$[0,t_{\max})$, so $\|\dot x\|$ is bounded there as well and $x$ is Lipschitz.
Hence $x^*:=\lim_{t\uparrow t_{\max}}x(t)$ exists, and $x^*\in\setS$ because the
solution is $\setS$-valued and $\setS$ is closed. Lemma~\ref{lem:existence} started
at $x^*$ then continues the solution past $t_{\max}$, contradicting maximality. So
$t_{\max}=+\infty$. The usual continuation alternative is not invoked, since it is
stated for fields defined on an open set.
\end{proof}

We now show that the equilibria of \eqref{eq:SD_dynamics}
coincide with solutions of the variational inequality
$\mathrm{VI}(F,\setS)$.

\begin{proposition}[Equilibrium--solution equivalence]\label{prop:equilibrium_VI}
A point $\bar x \in \setS$ is an equilibrium of
\eqref{eq:SD_dynamics} if and only if
$\bar x$ solves $\mathrm{VI}(F,\setS)$.
\end{proposition}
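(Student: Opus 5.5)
The plan is to reduce both directions to the variational characterization of the metric projection, Lemma~\ref{lem:proj_char}, applied with the metric $Q:=\matM(\bar x)^{-1}$ and the forward point $y:=\bar x-\alpha\matM(\bar x)F(\bar x)$. Since $\lambda>0$, $\bar x$ is an equilibrium of \eqref{eq:SD_dynamics} exactly when $T(\bar x)=\bar x$, that is, when $\bar x=P_{\setS,Q}(y)$. Because $\bar x\in\setS$, Lemma~\ref{lem:proj_char} (stated there for the metric $\matM(x)$, but its proof uses only symmetry and positive definiteness, so it applies verbatim to $Q$) says this holds if and only if
\[
(\bar x-y)^\top Q\,(w-\bar x)\ge0\qquad\forall w\in\setS .
\]

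The key computation is $\bar x-y=\alpha\matM(\bar x)F(\bar x)$. Multiplying by $Q=\matM(\bar x)^{-1}$ and using symmetry of $Q$ gives
\[
(\bar x-y)^\top Q(w-\bar x)=\alpha\,F(\bar x)^\top\matM(\bar x)\matM(\bar x)^{-1}(w-\bar x)=\alpha\inner{F(\bar x)}{w-\bar x}.
\]
Since $\alpha>0$, the projection inequality is therefore equivalent to $\inner{F(\bar x)}{w-\bar x}\ge0$ for all $w\in\setS$, which is \eqref{eq:VI}. Both implications follow at once from this chain of equivalences.

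There is no real obstacle. The step that needs care is the matching of the preconditioner $\matM(\bar x)$ with the projection metric $\matM(\bar x)^{-1}$: this is the cancellation that makes the equivalence exact at every $\alpha>0$, and I would remark on it explicitly. I would also point out that no assumptions beyond symmetric positive definiteness of $\matM(\bar x)$ are used. In particular, neither the spectral bounds nor any regularity of $\matM$ or $F$ enters.
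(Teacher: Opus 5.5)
Your proposal is correct and follows essentially the same route as the paper: both reduce $T(\bar x)=\bar x$ to the variational characterization \eqref{eq:proj_char} with $y=\bar x-\alpha\matM(\bar x)F(\bar x)$, where the pairing $\matM(\bar x)^{-1}\matM(\bar x)=I$ turns the projection inequality into $\alpha\inner{F(\bar x)}{w-\bar x}\ge0$. You also state three points that the paper's short proof leaves implicit: Lemma~\ref{lem:proj_char} transfers verbatim to the metric $\matM(\bar x)^{-1}$, and $\lambda>0$ and $\alpha>0$ are what make each step an equivalence.
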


\begin{proof}
By definition, $\bar x$ is an equilibrium if and only if
\[
\bar x
=
P_{\setS,\matM(\bar x)^{-1}}
\bigl(\bar x - \alpha \matM(\bar x)F(\bar x)\bigr).
\]
Using the variational characterization
\eqref{eq:proj_char} with
$y = \bar x - \alpha \matM(\bar x)F(\bar x)$,
this condition is equivalent to
\[
\alpha\inner{F(\bar x)}{x - \bar x} \ge 0,
\quad \forall x \in \setS,
\]
which coincides with \eqref{eq:VI}.
\end{proof}

Having established that equilibria of \eqref{eq:SD_dynamics} coincide with VI
solutions, we now address whether trajectories approach them.

\section{Stability analysis}\label{sec:stability}

We first derive consequences under anchored nonexpansiveness, then treat constant
and state-dependent metrics separately. Only the state-dependent case introduces
variation of $\matM(\cdot)$ into the estimates.

\subsection{Stability under a nonexpansiveness hypothesis}\label{sec:stabI}

The energy calculation below separates what follows from the standing assumptions
from the additional condition needed for stability. Fix
$\bar x\in\operatorname{SOL}(F,\setS)$ and use the inverse metric frozen at $\bar x$:
\begin{equation}\label{eq:Hbar}
\bar Q:=\matM(\bar x)^{-1},
\qquad
V(x):=\tfrac12\|x-\bar x\|_{\bar Q}^2 .
\end{equation}
The choice $\bar Q$, rather than $\matM(\bar x)$, matches the projection geometry
in \eqref{eq:Tmap}. The following identity is assumption-free.

\begin{lemma}[Energy identity]\label{lem:energy_identity}
Let $H\succ0$ be fixed, let $\bar x\in\R^n$, and put
$V_H(x):=\tfrac12\|x-\bar x\|_H^2$. Along any solution of
\eqref{eq:SD_dynamics},
\begin{equation}\label{eq:energy_identity}
\frac{d}{dt}V_H(x(t))
=
\frac{\lambda}{2}
\Bigl(
\|T(x)-\bar x\|_H^2-\|x-\bar x\|_H^2-\|T(x)-x\|_H^2
\Bigr).
\end{equation}
\end{lemma}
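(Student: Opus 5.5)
Since $H$ is a fixed symmetric positive definite matrix and the solution $x(\cdot)$ is absolutely continuous (Lemma~\ref{lem:existence}), the map $t\mapsto V_H(x(t))$ is absolutely continuous. Its derivative exists almost everywhere and, wherever $\dot x(t)$ exists, the chain rule for the quadratic form gives
\[
\frac{d}{dt}V_H(x(t))=\inner{x-\bar x}{\dot x}_H=\lambda\,\inner{x-\bar x}{T(x)-x}_H .
\]
Because $T$ is continuous (Lemma~\ref{lem:T_cont}), $\dot x=\lambda(T(x)-x)$ actually holds at every $t$. So the identity holds pointwise, not just almost everywhere.

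The rest is a polarization identity in the inner product $\inner{\cdot}{\cdot}_H$. Put $a:=x-\bar x$ and $b:=T(x)-x$, so that $T(x)-\bar x=a+b$. Expanding gives $\|a+b\|_H^2=\|a\|_H^2+2\inner{a}{b}_H+\|b\|_H^2$, hence
\[
\inner{a}{b}_H=\tfrac12\bigl(\|a+b\|_H^2-\|a\|_H^2-\|b\|_H^2\bigr).
\]
Substituting back and multiplying by $\lambda$ yields \eqref{eq:energy_identity}.

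No step is a genuine obstacle. The only point needing care is the regularity remark: the identity is stated for arbitrary fixed $H\succ0$ and arbitrary $\bar x$, so none of the standing assumptions on $F$, $\matM$ or the VI are needed beyond the existence of a differentiable solution. This is why the lemma is assumption-free. In particular, $H$ is frozen, so no $\dot{\matM}$ term appears. That term only enters later, when $H$ is replaced by the moving metric $\matM(x(t))^{-1}$.
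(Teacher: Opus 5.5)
Your proof is correct and follows the paper's: both compute $\frac{d}{dt}V_H=\lambda\langle x-\bar x,\,T(x)-x\rangle_H$ and then expand a square in $\langle\cdot,\cdot\rangle_H$. The only difference is labelling: the paper sets $b:=T(x)-\bar x$ and expands $\|a+(b-a)\|_H^2$, where you set $b:=T(x)-x$. Your extra remark is valid under the hypotheses of Lemma~\ref{lem:T_cont}, which the paper does not state: continuity of $T$ makes the solution $C^1$, so the identity holds at every $t$ rather than almost everywhere.
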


\begin{proof}
Write $a:=x-\bar x$ and $b:=T(x)-\bar x$, so that $b-a=T(x)-x$. Then
$\frac{d}{dt}V_H=\lambda\inner{a}{b-a}_H$, and expanding
$\|b\|_H^2=\|a+(b-a)\|_H^2$ gives
$2\inner{a}{b-a}_H=\|b\|_H^2-\|a\|_H^2-\|b-a\|_H^2$.
\end{proof}

The identity shows that stability requires a sign on its first two terms. We
therefore impose the following local condition on $(T,\bar x)$, not on the data.

\begin{assumption}[Anchored nonexpansiveness]\label{ass:T_nonexp}
There exists a neighborhood $U\subset\setS$ of $\bar x$ relative to $\setS$,
meaning that $\{x\in\setS:\|x-\bar x\|<\delta\}\subset U$ for some $\delta>0$, such that
\begin{equation}\label{eq:T_nonexp}
\|T(x)-\bar x\|_{\bar Q}\le\|x-\bar x\|_{\bar Q},
\qquad x\in U.
\end{equation}
\end{assumption}

By Lemma~\ref{lem:energy_identity}, condition \eqref{eq:T_nonexp} is pointwise
equivalent to
\[
\frac{d}{dt}V(x)\le-\frac{\lambda}{2}\|T(x)-x\|_{\bar Q}^2.
\]
Thus the result below is conditional: anchored nonexpansiveness is the required
dissipation estimate, not a regularity consequence of $\matM$ or $F$.

\begin{theorem}[Lyapunov consequences of anchored nonexpansiveness]
\label{thm:Lyap_stability_I}
Let Assumptions~\ref{ass:metric_bounds}, \ref{ass:metric_regular} and
\ref{ass:F_Lipschitz} hold, let $\bar x\in\operatorname{SOL}(F,\setS)$, and suppose
Assumption~\ref{ass:T_nonexp} holds on $U$. Every trajectory remaining in $U$
satisfies
\begin{equation}\label{eq:Vdot_bound}
\frac{d}{dt}V(x(t))\le-\frac{\lambda}{2}\|T(x(t))-x(t)\|_{\bar Q}^2\le0 .
\end{equation}
Hence $\bar x$ is Lyapunov stable relative to $\setS$: if $\varepsilon>0$ and
$\setS\cap\overline B(\bar x,\varepsilon)\subset U$, then every solution with
$\|x(0)-\bar x\|<\varepsilon/L$ is global and remains in
$\overline B(\bar x,\varepsilon)$. The set
$\Omega:=\{x\in\setS:\|x-\bar x\|_{\bar Q}\le\varepsilon/\sqrt L\}$ is compact.
It is positively invariant, meaning that every trajectory starting in $\Omega$
remains in $\Omega$ for all $t\ge0$, and each such trajectory satisfies
\[
\int_0^\infty\|T(x(t))-x(t)\|_{\bar Q}^2\,dt
\le\frac1\lambda\|x(0)-\bar x\|_{\bar Q}^2,
\qquad \|T(x(t))-x(t)\|\to0.
\]
Its $\omega$-limit set, the set of all limits of $x(t_j)$ along sequences
$t_j\to\infty$, is a nonempty compact connected subset of
$\operatorname{SOL}(F,\setS)\cap\Omega$; consequently
$\operatorname{dist}\bigl(x(t),\operatorname{SOL}(F,\setS)\bigr)\to0$.
\end{theorem}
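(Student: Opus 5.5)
The dissipation inequality \eqref{eq:Vdot_bound} follows directly. Apply Lemma~\ref{lem:energy_identity} with $H=\bar Q$. At any time when $x(t)\in U$, condition \eqref{eq:T_nonexp} makes $\|T(x)-\bar x\|_{\bar Q}^2-\|x-\bar x\|_{\bar Q}^2\le0$, which leaves $-\tfrac\lambda2\|T(x)-x\|_{\bar Q}^2$. Since $x(\cdot)$ is absolutely continuous and $V$ is smooth, $V(x(t))$ is absolutely continuous, so the inequality holds almost everywhere. It can then be integrated.

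\textbf{Stability and invariance.} Use the bounds $L^{-1}\|z\|^2\le\|z\|_{\bar Q}^2\le L\|z\|^2$ from Corollary~\ref{cor:proj_constants}. Suppose $\|x(0)-\bar x\|<\varepsilon/L$. Then $\|x(0)-\bar x\|_{\bar Q}<\varepsilon/\sqrt L$, so $x(0)$ lies in $\Omega$. Moreover, $\Omega\subset\setS\cap\overline B(\bar x,\varepsilon)\subset U$, because $\|x-\bar x\|\le\sqrt L\|x-\bar x\|_{\bar Q}\le\varepsilon$. Global existence and uniqueness come from Lemma~\ref{lem:unique}.

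Positive invariance of $\Omega$ is the step that needs care, because \eqref{eq:Vdot_bound} is only available while the trajectory is in $U$. I would argue by first exit. Let $t^*:=\sup\{t:x([0,t])\subset\Omega\}$ and suppose $t^*<\infty$. On $[0,t^*]$ the trajectory lies in $U$, so $V$ is nonincreasing there. Hence $V(x(t^*))\le V(x(0))$, and this is strictly below $\varepsilon^2/(2L)$ when the initial inequality is strict. By continuity the trajectory then stays in $\Omega$ slightly beyond $t^*$, which is a contradiction.

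For a general start in $\Omega$ with $V(x(0))=\varepsilon^2/(2L)$, strictness fails, so I would enlarge slightly. Choose $\varepsilon'>\varepsilon$ with $\setS\cap\overline B(\bar x,\varepsilon')\subset U$; this works whenever $U$ contains a slightly larger relative ball. Otherwise, note that $\Omega\subset U$, so on the boundary the trajectory stays in $U$ for a short time by continuity, provided $U$ is relatively open. It is cleanest to take $U$ relatively open, shrinking it to its relative interior, which still contains the ball of radius $\delta$. Compactness of $\Omega$ is immediate: it is closed in $\setS$, hence closed, and it is bounded.

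\textbf{Integral bound and convergence of the residual.} Integrating \eqref{eq:Vdot_bound} over $[0,t]$ and letting $t\to\infty$ gives the stated integral estimate. To get $\|T(x(t))-x(t)\|\to0$, I would use Barbalat's lemma. The function $g(t):=\|T(x(t))-x(t)\|^2_{\bar Q}$ is uniformly continuous. This is because $T$ is Lipschitz on the compact set $\Omega$ (proof of Lemma~\ref{lem:unique}), and $x(\cdot)$ is Lipschitz there since $\|\dot x\|\le\lambda\alpha L\max_\Omega\|F\|$ by \eqref{eq:displacement}. A nonnegative, integrable, uniformly continuous function tends to $0$.

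\textbf{Omega-limit set.} The trajectory is bounded in the compact set $\Omega$ and is uniformly continuous. By the standard argument, its $\omega$-limit set is nonempty, compact and connected, and it is contained in $\Omega$. If $x(t_j)\to x^\infty$, continuity of $T$ (Lemma~\ref{lem:T_cont}) gives $T(x^\infty)=x^\infty$. Proposition~\ref{prop:equilibrium_VI} then places $x^\infty$ in $\operatorname{SOL}(F,\setS)$.

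Finally, suppose $\operatorname{dist}(x(t),\operatorname{SOL})\not\to0$. Then some sequence stays at distance at least $\eta>0$ from $\operatorname{SOL}$. It has a convergent subsequence in $\Omega$, and its limit lies in $\operatorname{SOL}$, which is a contradiction.

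I expect the main obstacle to be the invariance step: handling the boundary of $\Omega$ and the relative neighbourhood $U$ precisely.
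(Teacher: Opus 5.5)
Your proposal is sound except at the step you flagged. There your remedy does not work under the stated hypotheses. The theorem asserts positive invariance of $\Omega$ for \emph{every} start in $\Omega$, including starts with $\|x(0)-\bar x\|_{\bar Q}=\varepsilon/\sqrt L$. For such starts the integral bound, the Barbalat step and the $\omega$-limit conclusions all depend on it. Neither a larger relative ball inside $U$ nor relative openness of $U$ is given. Passing to the relative interior of $U$ can also destroy $\Omega\subset U$. The hypotheses allow $U=\setS\cap\overline B(\bar x,\varepsilon)$, and if $\matM(\bar x)=LI$ then $\bar Q=L^{-1}I$ and $\Omega=U$. With $\setS=\R^n$ the boundary sphere of this ball lies in $\Omega$ but not in the interior of $U$. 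At an exit point on that sphere you therefore cannot assert that $V$ is nonincreasing on any interval past the exit time.

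The missing idea is that you need no neighbourhood of the exit point inside $U$, only the point itself. The right-hand side of \eqref{eq:SD_dynamics} is continuous, so $x$ is $C^1$ and \eqref{eq:energy_identity} holds at every $t$, not merely almost everywhere. At a putative finite exit time $t^*$ you have $x(t^*)\in\Omega\subset U$. Hence the derivative of $t\mapsto V(x(t))$ at $t^*$ is at most $-\tfrac\lambda2\|T(x(t^*))-x(t^*)\|_{\bar Q}^2$. Suppose this residual is nonzero. Then $V(x(t))<V(x(t^*))\le\varepsilon^2/(2L)$ on some interval $(t^*,t^*+\epsilon)$. Since $x(t)\in\setS$ by viability, the trajectory stays in $\Omega$ past $t^*$. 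If the residual is zero, $x(t^*)$ is an equilibrium, and uniqueness from Lemma~\ref{lem:unique} keeps the trajectory there. Either way $t^*$ is not the supremum. Alternatively, approximate a boundary start by the points $\bar x+(1-1/k)(x(0)-\bar x)\in\setS$, which lie in the strict sublevel set. Then pass to the limit using the Gr\"onwall continuous-dependence estimate from the proof of Lemma~\ref{lem:unique} and closedness of $\Omega$.

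With that repair the rest of your argument goes through. Your identification of $\omega$-limit points is more direct than the paper's. The paper uses a LaSalle-type argument: the limit set is invariant, $V$ is constant on it, so \eqref{eq:Vdot_bound} forces $T(x)=x$ there. You instead pass the already-proved convergence $\|T(x(t))-x(t)\|\to0$ through continuity of $T$, which avoids invariance of the limit set. Taking global existence directly from Lemma~\ref{lem:unique} is also legitimate, since Assumption~\ref{ass:F_Lipschitz} is in force.
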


\begin{proof}
Lemma~\ref{lem:energy_identity} and \eqref{eq:T_nonexp} give \eqref{eq:Vdot_bound}.
Put $\rho_\varepsilon:=\varepsilon/\sqrt L$ and define $\Omega$ as above. The metric
bounds give $\Omega\subset\overline B(\bar x,\varepsilon)\subset U$, and
$\|x(0)-\bar x\|<\varepsilon/L$ implies
$\|x(0)-\bar x\|_{\bar Q}<\rho_\varepsilon$. Since $V$ is nonincreasing in $U$, a
first-exit argument makes $\Omega$ positively invariant. Compactness of $\Omega$
and the continuation step closing the proof of Lemma~\ref{lem:unique} then give
global existence. Integrating \eqref{eq:Vdot_bound} gives the stated bound. On
$\Omega$ the vector field is bounded and $T$ is Lipschitz by
Proposition~\ref{prop:proj_joint}, hence the squared residual is uniformly
continuous, and Barbalat's lemma gives its convergence to zero.

For $s\ge0$, let $K_s:=\overline{\{x(t):t\ge s\}}$. Each $K_s$ is nonempty,
compact and connected, and these sets are nested. Their intersection is the
$\omega$-limit set, so it is nonempty, compact and connected. Autonomy and
uniqueness of the flow make it invariant. Since $V(x(t))$ is nonincreasing,
continuity makes $V$ constant on that limit set. Equation~\eqref{eq:Vdot_bound} forces
$T(x)=x$ at every point of that limit set, and
Proposition~\ref{prop:equilibrium_VI} identifies those points as solutions of
$\mathrm{VI}(F,\setS)$. Because the trajectory remains in
compact $\Omega$ and all its subsequential limits lie in
$\operatorname{SOL}(F,\setS)$, the distance conclusion follows.
\end{proof}

The conclusion is attraction to the local solution set, not point convergence. If
$\|T(x)-z\|_{\bar Q}\le\|x-z\|_{\bar Q}$ for every
$z\in\operatorname{SOL}(F,\setS)\cap U$, then the energy identity makes
$t\mapsto\|x(t)-z\|_{\bar Q}$ nonincreasing. Any accumulation point $z_*$ is a
solution by the theorem. Along a subsequence the distance to $z_*$ tends to zero,
so its monotonicity forces the whole trajectory to converge to $z_*$. The limit
depends on $x(0)$.

Monotonicity does not supply \eqref{eq:T_nonexp}, even when the metric is fixed.
Take $\setS=\R^2$, $\matM\equiv I$, $F(x)=Jx$, and $\bar x=0$, where
$J=\begin{psmallmatrix}0&-1\\1&0\end{psmallmatrix}$. Since $J$ is skew, $F$ is
monotone, and $\bar x$ is the unique solution. The projection is the identity, so
$T(x)=A_\alpha x$ with $A_\alpha=I-\alpha J$ and
$A_\alpha^\top A_\alpha=(1+\alpha^2)I$. For every $H\succ0$,
\[
\det\bigl(A_\alpha^\top HA_\alpha\bigr)=(1+\alpha^2)^2\det H>\det H ,
\]
whereas $X\preceq Y$ for positive definite $X,Y$ implies $\det X\le\det Y$. Thus
$A_\alpha^\top HA_\alpha\preceq H$ is impossible, and \eqref{eq:T_nonexp} fails for
every $\alpha>0$ and every fixed $H\succ0$. The flow $\dot x=-\lambda\alpha Jx$ has
circular trajectories, so $0$ is stable but not attracting.

This example has a different role from the other two uses of $J$.
Example~\ref{ex:mechanism} shows pointwise cancellation along two lines by a fixed
non-Euclidean matrix, whereas Section~\ref{sec:counterexample} rotates that matrix
to create an annulus under strong monotonicity. Here $\matM\equiv I$, and the
obstruction is nonexpansiveness in every fixed metric. Cocoercivity, treated in
Section~\ref{sec:stabII}, does suffice.

Related fixed-metric failures are known. Bolte observes that the projected-gradient
residual need not be monotone and replaces the quadratic energy by one augmented
with the potential \cite{Bolte2003}. Hu and Wang exhibit periodic trajectories for
an affine monotone operator on a box in $\R^3$, refuting a proposed convergence
result under pseudomonotonicity \cite{Hu2006}. Bolte's substitute is unavailable
for a general operator without a potential, and the positive fixed-metric results
add structure beyond monotonicity, typically symmetry of $\nabla F$
\cite{Xia2004,Hu2006}.

\subsection{Convergence and rates at a fixed metric}\label{sec:stabII}

Throughout this part the metric is constant, $\matM(x)\equiv M\succ0$, so that
$K_M=0$ is admissible in Assumption~\ref{ass:M_Lipschitz}. Write
$H_M:=(\R^n,\inner{\cdot}{\cdot}_{M^{-1}})$, and put
\begin{equation}\label{eq:Gmap}
G(x):=x-\alpha MF(x),
\qquad
T=P_{\setS,M^{-1}}\circ G .
\end{equation}
The whole section rests on one algebraic cancellation. For $h:=x-y$ and
$\Delta F:=F(x)-F(y)$,
\begin{equation}\label{eq:expansion}
\|G(x)-G(y)\|_{M^{-1}}^2
=
\|h\|_{M^{-1}}^2
-2\alpha\inner{\Delta F}{h}
+\alpha^2\,\Delta F^\top M\,\Delta F .
\end{equation}
The cross term is Euclidean and the quadratic term is $M$-weighted, because
$M^{-1}M=I$ removes one factor from each. This is a property of pairing the
projection metric with the inverse of the preconditioner, and it fails if the two
are chosen independently.

A weaker hypothesis than strong monotonicity also suffices at a fixed metric. Call
$F$ \emph{$M$-cocoercive with modulus $\sigma_M>0$} if
$\inner{F(x)-F(y)}{x-y}\ge\sigma_M(F(x)-F(y))^\top M(F(x)-F(y))$ for all
$x,y\in\setS$. Abbas and Attouch study
$\dot x+x-\operatorname{prox}_{\alpha\Phi}(x-\alpha\mathcal G(x))=0$ for $\Phi$ convex
lower semicontinuous and $\mathcal G$ cocoercive with modulus $\beta$, and obtain weak
convergence to a zero of $\partial\Phi+\mathcal G$ for every $\alpha\in(0,4\beta)$
\cite{Abbas2015}. Taking $\Phi$ to be the indicator of $\setS$ recovers the
cocoercive case of the present flow at $\matM\equiv I$, whose convergence
therefore follows from \cite{Abbas2015}. The change of variables below carries the
general constant metric to that case.

The general constant metric reduces to the Euclidean case. Setting
\begin{equation}\label{eq:change_of_var}
y:=M^{-1/2}x,
\qquad
C:=M^{-1/2}\setS,
\qquad
\widetilde F(y):=M^{1/2}F(M^{1/2}y)
\end{equation}
turns $\|\cdot\|_{M^{-1}}$ into the Euclidean norm and $P_{\setS,M^{-1}}$ into
$P_C$, and the flow into
$\dot y=\lambda\bigl(P_C(y-\alpha\widetilde F(y))-y\bigr)$. Since
$\inner{\widetilde F(y_1)-\widetilde F(y_2)}{y_1-y_2}=\inner{\Delta F}{h}$ and
$\|\widetilde F(y_1)-\widetilde F(y_2)\|^2=\Delta F^\top M\Delta F$, the map
$\widetilde F$ is cocoercive with modulus $\sigma_M$ exactly when $F$ is
$M$-cocoercive with that modulus.

If $F$ is strongly monotone and Lipschitz then cocoercivity is automatic. Under
Assumptions~\ref{ass:metric_bounds}, \ref{ass:F_Lipschitz}
and~\ref{ass:strong_monotone} one has $\sigma_M\ge\mu/(LK^2)$, since
$\Delta F^\top M\Delta F\le LK^2\|h\|^2$ and $\inner{\Delta F}{h}\ge\mu\|h\|^2$.
The state-dependent analysis below runs on strong monotonicity throughout, so the
cocoercive branch is not pursued further.

Strong monotonicity gives more than convergence. It gives an explicit rate, and
an explicit window of step sizes on which that rate holds.

\begin{theorem}[Fixed-metric exponential rate]\label{thm:fixed_metric_rate}
Let $\setS$ be nonempty closed convex, let $M\succ0$ be fixed and satisfy
Assumption~\ref{ass:metric_bounds}, and let Assumptions~\ref{ass:F_Lipschitz} and
\ref{ass:strong_monotone} hold. If
\begin{equation}\label{eq:window}
0<\alpha<\frac{2\mu}{LK^2},
\qquad
\rho:=\sqrt{1-\frac{2\alpha\mu}{L}+\alpha^2K^2},
\end{equation}
then $\rho\in[0,1)$, the map $T$ is a $\rho$-contraction of $\setS$ in $H_M$, it
has a unique fixed point $\bar x$, which is the unique solution of
$\mathrm{VI}(F,\setS)$, and every trajectory satisfies
\begin{equation}\label{eq:fixed_rate}
\|x(t)-\bar x\|_{M^{-1}}\le e^{-\lambda(1-\rho)t}\|x(0)-\bar x\|_{M^{-1}},
\qquad
\|x(t)-\bar x\|\le\sqrt{\kappa_2(M)}\;e^{-\lambda(1-\rho)t}\|x(0)-\bar x\| .
\end{equation}
Here $\kappa_2(M)=\lambda_{\max}(M)/\lambda_{\min}(M)$ is the spectral condition
number of $M$, so $\kappa_2(M)\le L^2$ under Assumption~\ref{ass:metric_bounds}.
The choice $\alpha_*=\mu/(LK^2)$ minimizes $\rho$, with
$\rho_*=\sqrt{1-(\mu/(LK))^2}$.
\end{theorem}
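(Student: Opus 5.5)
The plan is to combine the expansion \eqref{eq:expansion} with nonexpansiveness of $P_{\setS,M^{-1}}$ in $H_M$ to show that $T$ is a contraction. Then Banach's theorem gives the fixed point, and the energy identity gives the exponential decay.

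\textbf{Contraction estimate.} Fix $x,y\in\setS$, and put $h:=x-y$ and $\Delta F:=F(x)-F(y)$. In \eqref{eq:expansion} I bound the cross term by strong monotonicity, $-2\alpha\inner{\Delta F}{h}\le-2\alpha\mu\|h\|^2$. Assumption~\ref{ass:metric_bounds} applied to $M^{-1}$ gives $\|h\|^2\ge L^{-1}\|h\|_{M^{-1}}^2$, so the cross term is at most $-(2\alpha\mu/L)\|h\|^2_{M^{-1}}$.

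The quadratic term is where I expect the main friction. The naive bound is $\Delta F^\top M\Delta F\le LK^2\|h\|^2\le L^2K^2\|h\|_{M^{-1}}^2$, which is worse than the claimed $\alpha^2K^2$ coefficient by a factor $L^2$. To obtain exactly $\alpha^2K^2$, I would need something like $\Delta F^\top M\Delta F\le K^2\|h\|^2_{M^{-1}}$. That holds if $K$ is read as the Lipschitz constant of $F$ from $H_M$ into the dual metric, that is, of $\widetilde F$ in \eqref{eq:change_of_var}. It is not implied by Euclidean Lipschitz continuity alone.

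I would first check whether the intended constant in \eqref{eq:window} tolerates this. Note that $\rho<1$ requires exactly $\alpha<2\mu/(LK^2)$, which matches the window if the quadratic coefficient is $K^2$. If the estimate genuinely needs the weaker bound, I would state $\rho$ with $L^2K^2$ in the quadratic term, or equivalently interpret $K$ as the metric-adapted constant. In that case the rest of the argument is unchanged.

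With the bound granted, I get $\|G(x)-G(y)\|_{M^{-1}}^2\le\rho^2\|h\|^2_{M^{-1}}$. Lemma~\ref{lem:proj_arg_var} with $Q=M^{-1}$ then shows that $T$ is a $\rho$-contraction in $H_M$.

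\textbf{Properties of $\rho$.} The radicand of $\rho$ is nonnegative. Indeed $\mu\le K$ and $L\ge1$, so $1-2\alpha\mu/L+\alpha^2K^2\ge(1-\alpha\mu/L)^2\ge0$ after using $K^2\ge\mu^2/L^2$. The condition $\rho<1$ is equivalent to $\alpha^2K^2<2\alpha\mu/L$, which is exactly the window. Minimizing the quadratic in $\alpha$ gives $\alpha_*=\mu/(LK^2)$ and $\rho_*^2=1-\mu^2/(L^2K^2)$.

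\textbf{Fixed point.} The set $\setS$ is closed, hence complete in the equivalent norm $\|\cdot\|_{M^{-1}}$, and $T$ maps $\setS$ into $\setS$. Banach's theorem therefore yields a unique fixed point $\bar x$. Proposition~\ref{prop:equilibrium_VI} identifies it with a solution of $\mathrm{VI}(F,\setS)$. Any other solution would also be a fixed point, so the solution is unique.

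\textbf{Rate.} Apply Lemma~\ref{lem:energy_identity} with $H=M^{-1}$ and the contraction $\|T(x)-\bar x\|_{M^{-1}}\le\rho\|x-\bar x\|_{M^{-1}}$. Dropping $\|T(x)-x\|_H^2$ would only give $\dot V\le\lambda(\rho^2-1)V$, which is a rate $\lambda(1-\rho^2)/2$ and not the claimed one. So instead I compute directly:
\[
\tfrac{d}{dt}\tfrac12\|x-\bar x\|_{M^{-1}}^2=\lambda\inner{x-\bar x}{T(x)-\bar x}_{M^{-1}}-\lambda\|x-\bar x\|_{M^{-1}}^2\le-\lambda(1-\rho)\|x-\bar x\|^2_{M^{-1}}
\]
by Cauchy--Schwarz. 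This gives $\frac{d}{dt}\|x-\bar x\|_{M^{-1}}\le-\lambda(1-\rho)\|x-\bar x\|_{M^{-1}}$ wherever the norm is positive. Gr\"onwall's inequality then gives the first estimate in \eqref{eq:fixed_rate}.

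The Euclidean estimate follows from $\lambda_{\min}(M^{-1})\|z\|^2\le\|z\|^2_{M^{-1}}\le\lambda_{\max}(M^{-1})\|z\|^2$. The ratio of these extreme eigenvalues is $\kappa_2(M)$, which gives the factor $\sqrt{\kappa_2(M)}$.
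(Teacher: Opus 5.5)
Your contraction step has a genuine gap, and it is the step that carries the stated constant. As written, you either grant $\Delta F^\top M\Delta F\le K^2\|h\|_{M^{-1}}^2$ or you fall back to $\alpha^2L^2K^2$ in $\rho$.

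The granted bound is false in general, not merely unproved. Take $M=\diag(L,1/L)$, $F(x)=Kx$ and $h=e_1$: the left side is $K^2L$ and the right side is $K^2/L$. The fallback proves a weaker theorem, with window $\alpha<2\mu/(L^3K^2)$. So neither option establishes the statement.

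The missing idea is to convert norms only once, after combining the two perturbation terms in the Euclidean norm. By \eqref{eq:expansion}, strong monotonicity, and $\Delta F^\top M\Delta F\le\|M\|K^2\|h\|^2\le LK^2\|h\|^2$,
\[
\|G(x)-G(y)\|_{M^{-1}}^2\le\|h\|_{M^{-1}}^2+\alpha\bigl(\alpha LK^2-2\mu\bigr)\|h\|^2 .
\]
The coefficient $\alpha(\alpha LK^2-2\mu)$ is negative exactly on the window $\alpha<2\mu/(LK^2)$. There you may apply the single lower bound $\|h\|^2\ge L^{-1}\|h\|_{M^{-1}}^2$ to the whole term, and the negative sign makes the inequality go the right way. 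This gives
\[
\|G(x)-G(y)\|_{M^{-1}}^2\le\bigl(1-2\alpha\mu/L+\alpha^2K^2\bigr)\|h\|_{M^{-1}}^2=\rho^2\|h\|_{M^{-1}}^2 .
\]
The factor $L$ from $\|M\|$ is cancelled by the $L^{-1}$ of this conversion. Converting the two terms separately instead uses the lower norm-equivalence bound on the cross term and the upper one on the quadratic term. That is exactly where your extra factor $L^2$ comes from. No reinterpretation of $K$ is needed.

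The rest of your proposal is correct and matches the paper's argument:
\begin{itemize}
\item nonnegativity of the radicand via $\mu\le K$ and $L\ge1$;
\item equivalence of $\rho<1$ with the window, and the optimal $\alpha_*$;
\item Banach's theorem in $H_M$, with Proposition~\ref{prop:equilibrium_VI} for uniqueness;
\item the Euclidean conversion with $\sqrt{\kappa_2(M)}$.
\end{itemize}

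On the rate, the paper invokes Lemma~\ref{lem:energy_identity}. That route also yields $\lambda(1-\rho)$, not $\lambda(1-\rho^2)/2$, provided you keep the third term. Bound it below using $\|T(x)-x\|_{M^{-1}}\ge(1-\rho)\|x-\bar x\|_{M^{-1}}$; then the identity gives $\dot V\le-2\lambda(1-\rho)V$. This is equivalent to your direct Cauchy--Schwarz computation.
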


\begin{proof}
From \eqref{eq:expansion}, Assumption~\ref{ass:strong_monotone} and
Assumption~\ref{ass:F_Lipschitz},
\[
\|G(x)-G(y)\|_{M^{-1}}^2
\le\|h\|_{M^{-1}}^2+\bigl(-2\alpha\mu+\alpha^2LK^2\bigr)\|h\|^2 .
\]
The bracket is negative precisely when $\alpha<2\mu/(LK^2)$. Only then may
$\|h\|^2\ge L^{-1}\|h\|_{M^{-1}}^2$ be substituted, which yields
$\rho^2$ as displayed. It is essential that the two perturbation terms be combined
in the Euclidean norm before this substitution: converting them separately bounds
the positive term by $\alpha^2L^2K^2\|h\|^2_{M^{-1}}$ and yields the strictly
weaker window $\alpha<2\mu/(L^3K^2)$. Since
Assumptions~\ref{ass:F_Lipschitz} and \ref{ass:strong_monotone} give $\mu\le K$
and $L\ge1$, the radicand is nonnegative and $\rho$ is real.

Nonexpansiveness of $P_{\setS,M^{-1}}$ in $H_M$ gives the contraction. As $\setS$
is closed and $\|\cdot\|_{M^{-1}}$ is equivalent to the Euclidean norm, $\setS$ is
complete in $H_M$ and Banach's theorem applies. Finally
Lemma~\ref{lem:energy_identity} with $H=M^{-1}$ gives
$\frac{d}{dt}\|x-\bar x\|_{M^{-1}}\le\lambda(\rho-1)\|x-\bar x\|_{M^{-1}}$, and
Gr\"onwall yields the weighted estimate; the Euclidean form follows from
$\lambda_{\max}(M^{-1})$ and $\lambda_{\min}(M^{-1})$.
\end{proof}

\begin{remark}\label{rem:fixed_metric_known}
Fixed-metric exponential convergence follows from existing results. For
$F=\nabla f$ with $f$
strongly convex and $\nabla f$ Lipschitz, write $\mu_f$ and $K_f$ for their
respective moduli. Antipin gives the Euclidean flow an explicit exponential rate
and optimizes the step, obtaining $\alpha_{\mathrm{opt}}=4/(K_f+\mu_f)$ with rate
$4K_f\mu_f/(K_f+\mu_f)^2$ \cite[Theorem 3]{Antipin1994}. For a general operator,
taking the two operators to be the normal cone of $\setS$ and $F$ itself, the flow
\eqref{eq:SD_dynamics} at $\matM\equiv I$ is the
forward--backward dynamic of \cite{Bot2018}, whose Theorem~1 gives global
exponential convergence under strong monotonicity, and the change of variables
\eqref{eq:change_of_var} carries the general constant metric to that case, with
$\widetilde F$ still strongly monotone and Lipschitz. We use
\eqref{eq:window} below as an explicit baseline in $(\mu,K,L,\alpha)$. It is
attained on some instances. For $\setS=\R$, $\matM\equiv1$ and
$F(x)=kx$ one has $\mu=K=k$, and for $0<\alpha\le1/k$ the flow is
$\dot x=-\lambda\alpha kx$ with $\rho=1-\alpha k$, so the displayed rate
$\lambda(1-\rho)=\lambda\alpha k$ is attained.

The certificate does reduce correctly in the limit that connects the two
continuous-time models. Take $\matM\equiv I$, so $L=1$, and let $\alpha\downarrow0$
with $\lambda=1/\alpha$, which is the regime in which the residual field converges
to the tangent-cone field of the classical projected dynamical system. Then
$\rho=\sqrt{1-2\alpha\mu+\alpha^2K^2}=1-\alpha\mu+O(\alpha^2)$, so the rate
$\lambda(1-\rho)$ tends to $\mu$. That is exactly the exponential rate obtained for
the projected dynamical system under strong monotonicity with modulus $\mu$,
established for that flow both by Lyapunov argument \cite{Nagurney1996} and by
contraction analysis \cite{Allibhoy2025}. The step-size window and the constants in \eqref{eq:window}
are therefore artefacts of working at a fixed $\alpha>0$, and not a weaker
conclusion.
\end{remark}

\subsection{Convergence and rates at a state-dependent metric}\label{sec:stabIII}

The reduction in Remark~\ref{rem:fixed_metric_known} requires $M$ constant. When
the metric genuinely varies the flow is not a Euclidean one in disguise, and a
different argument is needed. The one below uses the metric at the current state
rather than a frozen one, which is what removes the perturbation terms that a
frozen comparison would produce.

Throughout this part Assumptions~\ref{ass:metric_bounds}--\ref{ass:M_Lipschitz}
and \ref{ass:strong_monotone} are in force, $\alpha$ and $\rho$ are as in
\eqref{eq:window}, and we write
\begin{equation}\label{eq:current_metric_V}
Q(x):=\matM(x)^{-1},
\qquad
h:=x-\bar x,
\qquad
r(x):=\|h\|_{Q(x)},
\qquad
V(x):=\tfrac12 r(x)^2 .
\end{equation}

The starting point is an identity that holds in every metric, and that is what
makes the current-metric argument possible.

\begin{lemma}[Solution characterization in every metric]\label{lem:cancellation}
Let $\bar x\in\operatorname{SOL}(F,\setS)$. Then for \emph{every} $x$ at which
$\matM(x)$ is defined,
\begin{equation}\label{eq:cancellation}
\bar x=P_{\setS,\matM(x)^{-1}}\bigl(\bar x-\alpha \matM(x)F(\bar x)\bigr).
\end{equation}
\end{lemma}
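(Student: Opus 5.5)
The plan is to verify \eqref{eq:cancellation} directly through the variational characterization of Lemma~\ref{lem:proj_char}, applied with the metric $\matM(x)^{-1}$ in place of $\matM(x)$. The lemma is stated for the metric $\matM(x)$, but its proof uses only that the matrix is symmetric positive definite. It therefore applies verbatim to $Q:=\matM(x)^{-1}$, which is symmetric positive definite because $\matM(x)$ is. Fix any $x$ at which $\matM(x)$ is defined. Put $y:=\bar x-\alpha\matM(x)F(\bar x)$ and take the candidate $z:=\bar x$, which lies in $\setS$ because $\bar x\in\operatorname{SOL}(F,\setS)$.

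By \eqref{eq:proj_char} with metric $Q$, the claim $\bar x=P_{\setS,Q}(y)$ is equivalent to
\[
(\bar x-y)^\top Q\,(w-\bar x)\ge0\qquad\forall w\in\setS .
\]
Here $\bar x-y=\alpha\matM(x)F(\bar x)$. The key step is the cancellation $Q\matM(x)=\matM(x)^{-1}\matM(x)=I$, which is where the matching of preconditioner and projection metric is used. Together with the symmetry of $Q$, it turns the left-hand side into
\[
\alpha\,F(\bar x)^\top\matM(x)\matM(x)^{-1}(w-\bar x)=\alpha\inner{F(\bar x)}{w-\bar x}.
\]
This is nonnegative for every $w\in\setS$ by \eqref{eq:VI}, since $\alpha>0$. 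This is exactly the computation in the proof of Proposition~\ref{prop:equilibrium_VI}, now carried out with the metric frozen at an arbitrary $x$ rather than at $\bar x$.

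There is no genuine obstacle. The only points needing care are bookkeeping. First, Lemma~\ref{lem:proj_char} must be read for a general symmetric positive definite matrix and not only for $\matM(x)$ itself. Second, the transpose must be handled correctly, using $(\matM(x)F(\bar x))^\top=F(\bar x)^\top\matM(x)$ by symmetry. Finally, the identity does not require $x\in\setS$, nor any bound or regularity on $\matM$; it uses only pointwise positive definiteness at $x$.
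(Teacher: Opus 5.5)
Your proposal is correct and matches the paper's proof: both apply the variational characterization \eqref{eq:proj_char} with metric $Q=\matM(x)^{-1}$ and candidate $z=\bar x$. Both then use $Q\matM(x)=I$ to reduce the condition to $\alpha\inner{F(\bar x)}{w-\bar x}\ge0$, which holds because $\bar x$ solves $\mathrm{VI}(F,\setS)$.
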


\begin{proof}
Put $Q=\matM(x)^{-1}$ and $y=\bar x-\alpha\matM(x)F(\bar x)$. Then
$Q(\bar x-y)=\alpha Q\matM(x)F(\bar x)=\alpha F(\bar x)$, so for every
$w\in\setS$,
$\inner{Q(\bar x-y)}{w-\bar x}=\alpha\inner{F(\bar x)}{w-\bar x}\ge0$
because $\bar x$ solves the variational inequality. By \eqref{eq:proj_char} this
is exactly \eqref{eq:cancellation}.
\end{proof}

Identity \eqref{eq:cancellation} uses only symmetry and invertibility of
$\matM(x)$, holds for every $\alpha>0$, and does not require $x\in\setS$. It is
the reason the state-dependent case is tractable, because it lets $T(x)$ and
$\bar x$ be compared as two images of the \emph{same} projection
$P_{\setS,Q(x)}$, so no metric-mismatch term arises.

Bonettini, Zanella and Zanni establish the corresponding stationarity identity
for every step size and every fixed positive definite matrix in the gradient
setting \cite[Lemma 2.2]{Bonettini2009}. Substituting $D=\matM(x)$ yields
\eqref{eq:cancellation}; the form above emphasizes that $x$ varies while
$\bar x$ remains fixed.

\begin{lemma}[Contraction in the current metric]\label{lem:current_contraction}
For every $x\in\setS$,
\begin{equation}\label{eq:current_contraction}
\|T(x)-\bar x\|_{Q(x)}\le\rho\,\|x-\bar x\|_{Q(x)} .
\end{equation}
\end{lemma}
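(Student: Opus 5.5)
The plan is to compare $T(x)$ and $\bar x$ as two images of the same projection $P_{\setS,Q(x)}$ and then run the fixed-metric contraction argument of Theorem~\ref{thm:fixed_metric_rate} with the metric frozen at the current state $x$. Fix $x\in\setS$ and write $M:=\matM(x)$ and $Q:=Q(x)=M^{-1}$. By definition \eqref{eq:Tmap}, $T(x)=P_{\setS,Q}\bigl(x-\alpha MF(x)\bigr)$. By Lemma~\ref{lem:cancellation}, $\bar x=P_{\setS,Q}\bigl(\bar x-\alpha MF(\bar x)\bigr)$. The first inequality of \eqref{eq:proj_arg_var} (nonexpansiveness in $\|\cdot\|_Q$) then gives
\[
\|T(x)-\bar x\|_{Q}\le\bigl\|(x-\bar x)-\alpha M\bigl(F(x)-F(\bar x)\bigr)\bigr\|_{Q}.
\]
No metric-mismatch term appears, because both projections use the same $Q$.

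Next, expand the right-hand side exactly as in \eqref{eq:expansion}, with $M$ the frozen value $\matM(x)$, $h=x-\bar x$ and $\Delta F=F(x)-F(\bar x)$. This gives
\[
\|h\|_{Q}^2-2\alpha\inner{\Delta F}{h}+\alpha^2\Delta F^\top M\Delta F .
\]
The expansion is purely algebraic: it uses only $QM=I$ and the symmetry of $M$, so it is valid for the frozen matrix even though $M$ depends on $x$. We then bound the middle term using Assumption~\ref{ass:strong_monotone} with $y=\bar x\in\setS$. For the last term we use $\|M\|\le L$ from Assumption~\ref{ass:metric_bounds} together with Assumption~\ref{ass:F_Lipschitz}. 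These give the bound $\|h\|_Q^2+(-2\alpha\mu+\alpha^2LK^2)\|h\|^2$.

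The step needing care is converting the Euclidean $\|h\|^2$ into $\|h\|_Q^2$ without losing the window. As in the proof of Theorem~\ref{thm:fixed_metric_rate}, the two perturbation terms must be combined in the Euclidean norm before converting. Under \eqref{eq:window} the bracket is negative, so we may use the lower bound $\|h\|^2\ge\|h\|_Q^2/\lambda_{\max}(Q)\ge L^{-1}\|h\|_Q^2$. That lower bound holds because $\lambda_{\max}(Q)=1/\lambda_{\min}(M)\le\|M^{-1}\|\le L$. Substituting gives the factor
\[
1-\frac{2\alpha\mu}{L}+\alpha^2K^2=\rho^2 ,
\]
and taking square roots yields \eqref{eq:current_contraction}. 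Every constant is uniform in $x$, since Assumption~\ref{ass:metric_bounds} holds at every point of $\setS$. The Lipschitz continuity of $\matM$ is not used here; it enters only later, through $\dot{\matM}$ in the derivative of $V$.
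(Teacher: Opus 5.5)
Your proposal is correct and follows the paper's proof: you use Lemma~\ref{lem:cancellation} to write $T(x)$ and $\bar x$ as images under the same projection $P_{\setS,Q(x)}$, apply the $Q(x)$-nonexpansiveness from Lemma~\ref{lem:proj_arg_var}, and then rerun the estimate from the proof of Theorem~\ref{thm:fixed_metric_rate} with $M=\matM(x)$ frozen. Your explicit check that $\lambda_{\max}(Q(x))=\|\matM(x)^{-1}\|\le L$ justifies the one norm conversion, which you correctly make only after combining the two Euclidean perturbation terms, exactly as the paper intends.
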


\begin{proof}
By Lemma~\ref{lem:cancellation} both $T(x)$ and $\bar x$ are images under
$P_{\setS,Q(x)}$, of $x-\alpha\matM(x)F(x)$ and
$\bar x-\alpha\matM(x)F(\bar x)$ respectively. Lemma~\ref{lem:proj_arg_var} gives
$\|T(x)-\bar x\|_{Q(x)}\le\|h-\alpha\matM(x)\bigl(F(x)-F(\bar x)\bigr)\|_{Q(x)}$,
and the right-hand side is estimated exactly as in the proof of
Theorem~\ref{thm:fixed_metric_rate}, with $M$ replaced by $\matM(x)$ throughout.
No term involving $F(\bar x)$ or $K_M$ appears.
\end{proof}

The metric-derivative term is what confines the next theorem to a neighbourhood.
In the
non-Euclidean flows of convex programming the geometry is generated by a convex
potential. A Legendre function $\varphi$ induces the Hessian metric $H=\nabla^2\varphi$, and
the Lyapunov functional is then the Bregman distance $D_\varphi(\bar x,\cdot)$, whose
gradient in that metric is the displacement field $x-\bar x$ exactly
\cite{Alvarez2004,Attouch2004}. No derivative of the metric survives. Alvarez,
Bolte and Brahic show that this is available for Hessian metrics and for no
others. Their Theorem~3.1 states that the fields $x\mapsto x-y$ are gradients in
the metric $H$ precisely when $H=\nabla^2\varphi$ for a strictly convex $\varphi$ of class
$C^3$, which forces the integrability conditions
$\partial_iH_{jk}=\partial_jH_{ik}$ \cite{Alvarez2004}. Mirror descent dynamics for monotone variational inequalities
sit in the same place, with the Fenchel coupling of a distance-generating function
in the role of the Bregman distance \cite{Mertikopoulos2018}.

The class considered here permits arbitrary Lipschitz positive definite
matrix-valued maps and therefore does not generally supply the canonical Bregman
functional used in the cited proofs. We instead use the weighted quadratic $V$ of
\eqref{eq:current_metric_V}; its derivative contains a term in $\dot{\matM}$ of
indefinite sign. Controlling that term yields an exponential rate on a
neighbourhood of $\bar x$ where metric variation cannot overturn the pointwise
contraction.

\begin{theorem}[State-dependent local exponential stability]\label{thm:exponential}
Let $K_M>0$ and set
\begin{equation}\label{eq:CL}
C_L:=\tfrac12L^{3/2}K_M(1+\rho).
\end{equation}
Choose $R$ with $0<R<(1-\rho)/C_L$ and put $\eta_R:=(1-\rho)-C_LR>0$. Then the
sublevel set $\{x\in\setS: r(x)\le R\}$ is positively invariant, every trajectory
with $r(x(0))\le R$ exists for all $t\ge0$, and
\begin{equation}\label{eq:local_rate}
r(x(t))\le e^{-\lambda\eta_R t}\,r(x(0)),
\qquad
\|x(t)-\bar x\|\le L\,e^{-\lambda\eta_R t}\|x(0)-\bar x\| .
\end{equation}
The condition $\|x(0)-\bar x\|<R/\sqrt L$ is sufficient for $r(x(0))<R$. The
choice $R=(1-\rho)/\bigl(L^{3/2}K_M(1+\rho)\bigr)$, half the admissible supremum,
gives $\eta_R=(1-\rho)/2$ and the Euclidean condition
$\|x(0)-\bar x\|<(1-\rho)/\bigl(L^{2}K_M(1+\rho)\bigr)$. We refer to it below as the
half-rate choice, since $\eta_R$ tends to $1-\rho$ as $R\to0$ and to zero as $R$
approaches the supremum.
\end{theorem}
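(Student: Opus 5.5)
The plan is to differentiate $V(x(t))=\tfrac12 h^\top Q(x)h$ along a trajectory, separate the fixed-metric part from the metric-derivative part, and close a differential inequality for $r$. Since $x(\cdot)$ is absolutely continuous and $Q(\cdot)$ is Lipschitz on $\setS$ by \eqref{eq:inv_metric_var} (with constant $L^2K_M$), the map $t\mapsto Q(x(t))$ is absolutely continuous. Almost everywhere we then have
\[
\frac{d}{dt}V=\lambda\inner{h}{T(x)-x}_{Q(x)}+\tfrac12 h^\top\tfrac{d}{dt}Q(x(t))\,h ,
\]
with $\|\tfrac{d}{dt}Q(x(t))\|\le L^2K_M\|\dot x\|$.

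For the first term, apply Lemma~\ref{lem:energy_identity} pointwise with $H=Q(x)$ frozen at the current time; the identity is only used algebraically, as $\inner{a}{b-a}_H=\tfrac12(\|b\|_H^2-\|a\|_H^2-\|b-a\|_H^2)$. Simpler still, use Cauchy--Schwarz together with Lemma~\ref{lem:current_contraction}:
\[
\inner{h}{T(x)-\bar x-h}_{Q(x)}\le \rho r^2-r^2=-(1-\rho)r^2 .
\]
For the second term, bound
\[
\|\dot x\|=\lambda\|T(x)-x\|\le\lambda\sqrt L\,\|T(x)-x\|_{Q(x)}\le\lambda\sqrt L\,(\|T(x)-\bar x\|_{Q(x)}+r)\le\lambda\sqrt L(1+\rho)r ,
\]
again by Lemma~\ref{lem:current_contraction}. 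Combining this with $\|h\|^2\le L r^2$ would give
\[
\tfrac12|h^\top\dot Q h|\le\tfrac12 L^2K_M\cdot\lambda\sqrt L(1+\rho)r\cdot Lr^2 ,
\]
which is too large by a factor of $L^2$ compared with $C_L$.

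The main obstacle is therefore to obtain exactly $C_L=\tfrac12L^{3/2}K_M(1+\rho)$, and the fix is to avoid converting $\dot Q$ through the inverse. I would write $\dot Q=-Q\dot{\matM}Q$, so that $h^\top\dot Q h=-(Qh)^\top\dot{\matM}(Qh)$. Then
\[
|h^\top\dot Qh|\le K_M\|\dot x\|\,\|Qh\|^2\le K_M\|\dot x\|\,L\,r^2 ,
\]
using $\|Qh\|^2=h^\top Q^2h\le\lambda_{\max}(Q)\,h^\top Qh\le Lr^2$. Together with $\|\dot x\|\le\lambda\sqrt L(1+\rho)r$ this gives
\[
\tfrac12|h^\top\dot Qh|\le\tfrac12\lambda L^{3/2}K_M(1+\rho)r^3=\lambda C_L r^3 .
\]
Hence
\[
\frac{d}{dt}\Bigl(\tfrac12r^2\Bigr)\le-\lambda\bigl((1-\rho)-C_Lr\bigr)r^2 ,
\]
so that $\dot r\le-\lambda\bigl((1-\rho)-C_Lr\bigr)r$ wherever $r>0$. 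One can instead work with $V$ directly to avoid dividing by $r$.

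For invariance and the rate, on the set $\{r\le R\}$ we have $\dot V\le-2\lambda\eta_R V\le0$. A first-exit argument then shows the set is positively invariant: if $r$ exceeded $R$, take the last time $r\le R$; just after it $\dot V$ would be nonpositive, a contradiction. The set is closed, and it is bounded because $r\ge\|h\|/\sqrt L$. Global existence then follows from the continuation step in the proof of Lemma~\ref{lem:unique}, or directly from Lemma~\ref{lem:unique}. Gr\"onwall applied to $V$ gives $r(x(t))\le e^{-\lambda\eta_Rt}r(x(0))$. The Euclidean form follows from $\|h\|\le\sqrt L\,r$ and $r\le\sqrt L\|h\|$, which yields the factor $L$; the same inequality shows that $\|x(0)-\bar x\|<R/\sqrt L$ is sufficient. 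The half-rate choice is obtained by substituting $R=(1-\rho)/(2C_L)$.
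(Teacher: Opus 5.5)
Your proposal is correct and follows the paper's proof step for step. It uses the same split of $\dot V$, Cauchy--Schwarz with Lemma~\ref{lem:current_contraction} for the contraction term, the identity $\dot Q=-Q\dot{\matM}Q$ with $Q^2\preceq LQ$ to obtain exactly $C_L$, the velocity bound $\|\dot x\|\le\lambda\sqrt L(1+\rho)r$, and a first-exit and Gr\"onwall argument on $V$. The one step worth stating explicitly is why $\dot V\le0$ just after a putative exit time: the differential inequality holds at every $r$, and its bracket stays positive slightly beyond $R$ by the strict margin $R<(1-\rho)/C_L$ and continuity of $r$.
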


\begin{proof}
Along a trajectory, $\matM(x(t))$ is Lipschitz in $t$ by
Assumption~\ref{ass:M_Lipschitz}, hence so is $Q(x(t))$ by
\eqref{eq:inv_metric_var}, and $V$ is locally absolutely continuous. At almost
every $t$,
\[
\dot V=h^\top Q\dot h+\tfrac12h^\top\dot Qh ,
\qquad
\dot Q=-Q\dot{\matM}Q,
\qquad
\|\dot{\matM}\|\le K_M\|\dot x\| .
\]
For the first term, Cauchy--Schwarz in $\inner{\cdot}{\cdot}_{Q}$ and
\eqref{eq:current_contraction} give
$h^\top Q\dot h=\lambda\bigl(\inner{h}{T(x)-\bar x}_{Q}-r^2\bigr)\le-\lambda(1-\rho)r^2$.
For the second, $Q^2\preceq LQ$ holds because every eigenvalue of $Q$ lies in
$[L^{-1},L]$, so
\[
|h^\top\dot Qh|=\bigl|(Qh)^\top\dot{\matM}(Qh)\bigr|
\le\|\dot{\matM}\|\,h^\top Q^2h\le LK_M\|\dot x\|\,r^2 .
\]
Finally $\|\dot x\|=\lambda\|T(x)-x\|\le\lambda\sqrt L\bigl(\|T(x)-\bar x\|_{Q}+\|h\|_{Q}\bigr)
\le\lambda\sqrt L(1+\rho)r$. Combining,
\begin{equation}\label{eq:Vdot_state}
\dot V\le-\lambda\bigl[(1-\rho)-C_Lr\bigr]r^2 .
\end{equation}
On $\{r\le R\}$ the bracket is at least $\eta_R>0$, so $\dot V\le-2\lambda\eta_RV$.
A first-exit argument identical to that of Theorem~\ref{thm:Lyap_stability_I} makes
the sublevel positively invariant, compactness gives existence for all $t\ge0$, and
Gr\"onwall gives \eqref{eq:local_rate}. The Euclidean statements follow from
$\|h\|\le\sqrt L\,r$ at time $t$ and $r\le\sqrt L\,\|h\|$ at time $0$, the two
conversions using the two halves of Assumption~\ref{ass:metric_bounds}.
\end{proof}

\begin{remark}\label{rem:exp_scope}
If $K_M=0$ then $C_L=0$, the radius restriction disappears, and
Theorem~\ref{thm:exponential} reduces to the global statement of
Theorem~\ref{thm:fixed_metric_rate}, with the Euclidean prefactor sharpening from
$L$ to $\sqrt{\kappa_2(M)}$. For $K_M>0$ the half-rate construction reduces the
certified rate by a factor of two to control metric variation; this comparison
concerns certified bounds rather than actual convergence speeds.
\end{remark}

The standing assumptions do not support a global conclusion. Two sufficient
variation restrictions are recorded here, followed by an example showing that the
standing assumptions alone are insufficient.

\begin{theorem}[Global convergence under variation control]\label{thm:global}
Suppose $\setS$ is compact and put $F_{\max}:=\sup_{x\in\setS}\|F(x)\|$. If
\begin{equation}\label{eq:cond19}
c_g:=1-\rho-\frac{\alpha}{2}L^2K_MF_{\max}>0,
\end{equation}
then every trajectory satisfies $r(x(t))\le e^{-\lambda c_g t}r(x(0))$ and
$\|x(t)-\bar x\|\le L\,e^{-\lambda c_gt}\|x(0)-\bar x\|$. Moreover
\eqref{eq:cond19} holds for some $\alpha>0$ if and only if
$\mu/L>\tfrac12L^2K_MF_{\max}$.
\end{theorem}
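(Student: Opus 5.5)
The plan is to rerun the computation from the proof of Theorem~\ref{thm:exponential}, but to bound $\|\dot x\|$ by the displacement estimate of Lemma~\ref{lem:displacement} rather than by $r$. Since $\setS$ is compact, that bound is uniform in the state. As before, $V=\tfrac12 r^2$ is locally absolutely continuous along trajectories. At almost every $t$,
\[
\dot V=h^\top Q\dot h+\tfrac12 h^\top\dot Q h .
\]
Lemma~\ref{lem:current_contraction} together with Cauchy--Schwarz in $\inner{\cdot}{\cdot}_Q$ gives $h^\top Q\dot h\le-\lambda(1-\rho)r^2$. As in that proof, $Q^2\preceq LQ$ and $\dot Q=-Q\dot{\matM}Q$ give
\[
|h^\top\dot Qh|\le LK_M\|\dot x\|\,r^2 .
\]

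For the metric-derivative term I would use \eqref{eq:displacement} instead of the triangle inequality. This gives $\|\dot x\|=\lambda\|T(x)-x\|\le\lambda\alpha L\|F(x)\|\le\lambda\alpha LF_{\max}$. Here $F_{\max}<\infty$ because $F$ is continuous on the compact set $\setS$. Combining the two bounds,
\[
\dot V\le-\lambda\Bigl(1-\rho-\tfrac{\alpha}{2}L^2K_MF_{\max}\Bigr)r^2=-2\lambda c_gV .
\]
This inequality holds on all of $\setS$, with no radius restriction. Global existence comes from Lemma~\ref{lem:unique}, and viability from Lemma~\ref{lem:existence}. Gr\"onwall then yields $r(x(t))\le e^{-\lambda c_gt}r(x(0))$. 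The Euclidean estimate follows by the same two conversions $\|h\|\le\sqrt L\,r$ and $r\le\sqrt L\|h\|$ used at the end of Theorem~\ref{thm:exponential}.

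For the ``if and only if'' statement, write $a:=\mu/L$ and $b:=\tfrac12L^2K_MF_{\max}$, and set $g(\alpha):=1-\rho(\alpha)-b\alpha$ on $(0,2\mu/(LK^2))$. Since $\rho(0)=1$ and $\rho'(0)=-\mu/L$, we get $g(0)=0$ and $g'(0)=a-b$. So if $a>b$, then $g>0$ for all small $\alpha$. For the converse I would show that $1-\rho(\alpha)\le a\alpha$ for every $\alpha>0$. From $\rho^2=1-2a\alpha+\alpha^2K^2\ge 1-2a\alpha+a^2\alpha^2=(1-a\alpha)^2$, which uses $K\ge\mu\ge\mu/L$, we obtain $\rho\ge 1-a\alpha$ whenever $1-a\alpha\ge0$. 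When $1-a\alpha<0$ the inequality is trivial because $\rho\ge0$. Hence $c_g\le(a-b)\alpha$, and this is not positive when $a\le b$.

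The step I expect to be the main obstacle is making the first step rigorous: handling $\dot{\matM}$ almost everywhere when $\matM$ is only Lipschitz. The bound $\|\frac{d}{dt}\matM(x(t))\|\le K_M\|\dot x\|$ comes from Assumption~\ref{ass:M_Lipschitz} by taking difference quotients. I would also need $\bar x$ to be the unique solution, which holds by strong monotonicity through Theorem~\ref{thm:fixed_metric_rate}. The degenerate case $\mu\le K$ fails only if $\setS$ is a singleton, and that case is trivial.
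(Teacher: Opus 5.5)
Your proof is correct and follows the paper's argument: the uniform velocity bound $\|\dot x\|\le\lambda\alpha LF_{\max}$ from Lemma~\ref{lem:displacement} replaces the local bound in the proof of Theorem~\ref{thm:exponential}, and necessity in the characterization comes from $\rho\ge 1-a\alpha$, exactly as in the paper. The only difference is in sufficiency: you use $g(0)=0$ and $g'(0)=a-b>0$, which shows only that some small $\alpha$ works, whereas the paper writes $1-\rho=\alpha(2a-\alpha K^2)/(1+\rho)$ and so obtains the explicit range $0<\alpha<2(a-b)/K^2$ of admissible step sizes.
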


\begin{proof}
Lemma~\ref{lem:displacement} gives $\|T(x)-x\|\le\alpha LF_{\max}$, so
$\|\dot x\|\le\lambda\alpha LF_{\max}$. Substituting this in place of the local
velocity bound in the proof of Theorem~\ref{thm:exponential} replaces $C_Lr$ by
$\tfrac{\alpha}{2}L^2K_MF_{\max}$ and gives $\dot V\le-2\lambda c_gV$. For the
characterization, write $a:=\mu/L$ and $c:=\tfrac12L^2K_MF_{\max}$. Since
$\rho^2-(1-a\alpha)^2=\alpha^2(K^2-a^2)\ge0$ we have $1-\rho\le a\alpha$, so
$1-\rho>c\alpha$ is impossible when $c\ge a$; conversely if $a>c$ then
$1-\rho=\alpha(2a-\alpha K^2)/(1+\rho)>\alpha(a-\alpha K^2/2)>c\alpha$ for
$0<\alpha<2(a-c)/K^2$.
\end{proof}

\begin{remark}\label{rem:cond21}
A pathwise refinement is available. Let
$\vartheta(t):=\lambda_{\max}^{+}\bigl(Q^{-1/2}\dot QQ^{-1/2}\bigr)$, the positive
part of the largest eigenvalue of $-\matM^{-1/2}\dot{\matM}\matM^{-1/2}$. The same
computation gives $\dot V\le\bigl[-2\lambda(1-\rho)+\vartheta(t)\bigr]V$, so if
$\int_0^t\vartheta\le2\lambda(1-\rho-\eta)t+\Gamma_0$ for some $\eta\in(0,1-\rho]$ and
$\Gamma_0<\infty$, then $r(x(t))\le e^{\Gamma_0/2}e^{-\lambda\eta t}r(x(0))$. In particular finite
total positive variation $\int_0^\infty\vartheta<\infty$ recovers the fixed-metric
rate with a finite prefactor. On compact $\setS$, \eqref{eq:cond19} implies this
condition with $\Gamma_0=0$. The converse fails, so \eqref{eq:cond19} is the more
restrictive of the two. For a witness, take the metric of
Example~\ref{rem:periodic} and the stationary trajectory $x(t)\equiv\bar x$. Then
$\vartheta\equiv0$, so the pathwise condition holds, while \eqref{eq:cond19} fails
for that metric.

A budget of this kind is what one should expect. The other way of making a
projected flow state dependent is to move the feasible set rather than the
metric, and there the same thing happens. For $C(x)=c(x)+C_0$ with $c$ Lipschitz
with constant $\ell_C$, Antipin, Jaćimović and Mijajlović prove convergence of a
second-order projected method under
\[
\ell_C<\min\Bigl\{\frac{\mu}{K},\ \frac{2\mu}{K^2}\Bigl(K+\mu-\sqrt{\mu(2K+\mu)}\Bigr)\Bigr\}
\]
\cite[Theorem~1]{Antipin2011}. They summarize the restriction by stating that the
feasible set must vary slowly. Condition \eqref{eq:cond19} plays the same role for
a moving metric that their bound plays for a moving set.
\end{remark}

\section{A rotating metric with periodic orbits}\label{sec:counterexample}

We now rotate the eigenframe of the matrix in Example~\ref{ex:mechanism} with the
polar angle.

\begin{example}[A rotating metric]\label{rem:periodic}
Let $\setS=\{x\in\R^2:\|x\|\le1.1\}$ and $F(x)=(I+2J)x$ with
$J=\begin{psmallmatrix}0&-1\\1&0\end{psmallmatrix}$, and put $\lambda=1$ and
$\alpha=0.05$. Then $F$ is $1$-strongly monotone and $\sqrt5$-Lipschitz, and
$\bar x=0$ is the unique solution. For $x\ne0$ set
\[
s=\|x\|,\qquad e_r=x/s,\qquad e_\theta=Je_r,\qquad R(\theta)=[e_r,e_\theta],
\]
and put
\[
B=\begin{psmallmatrix}1&-1/2\\-1/2&1/2\end{psmallmatrix}\succ0,
\qquad
p(u)=10u^3-15u^4+6u^5,
\qquad
\chi(s)=
\begin{cases}
p(s/0.8), & s\le0.8,\\
1, & s>0.8 .
\end{cases}
\]
Define
\begin{equation}\label{eq:counterexample_metric}
\matM(x)=(1-\chi)I+\chi\,R(\theta)BR(\theta)^\top,
\qquad \matM(0)=I .
\end{equation}
Then $\matM$ is $C^2$, uniformly positive definite and Lipschitz, with
$D\matM(0)=0$. With $r_\alpha=1.1/\sqrt{1+\alpha^2/4}$, every circle
$0.8\le s\le r_\alpha$ is a periodic orbit. Moreover $\matM^{-1}$ is not the
Hessian of a potential on that annulus.
\end{example}

The three claims are verified in turn.

\paragraph{Regularity.}
For $0\le u\le1$ one has $p'(u)=30u^2(1-u)^2$, so $0\le p(u)\le1$. Moreover
$p(0)=p'(0)=p''(0)=0$ and $p(1)=1$, $p'(1)=p''(1)=0$. Hence $\chi(s)=O(s^3)$ at the
origin, and $\chi$ matches the constant value $1$ with two derivatives at $s=0.8$. A
derivative of order $k$ of the angular factor is $O(s^{-k})$ for $k=1,2$. The
derivatives of orders zero, one and two of $\matM-I$ are therefore $O(s^3)$,
$O(s^2)$ and $O(s)$. They extend continuously to the origin, and $D\matM(0)=0$. So
$\matM$ is $C^2$ and Lipschitz on the compact set $\setS$. Since $0\le\chi\le1$ and
the eigenvalues of $B$ are $(3\pm\sqrt5)/4$, the metric is uniformly positive
definite and one may take $L=3+\sqrt5$. Differentiating \eqref{eq:counterexample_metric}
gives the certified bound $K_M\le3.571$. The step $\alpha=0.05$ lies inside the
window \eqref{eq:window}, since $2\mu/(LK^2)=0.0764$.

\paragraph{An annulus of periodic orbits.}
On $0.8\le s\le1.1$ one has $\chi=1$ and $B(1,2)^\top=(0,\tfrac12)^\top$, hence
$\matM(x)F(x)=\tfrac12Jx$. For the forward point $y=x-\alpha\matM(x)F(x)$,
\[
\|y\|=s\sqrt{1+\alpha^2/4} .
\]
If $0.8\le s\le r_\alpha$ then $y\in\setS$, the projection is inactive, and
$\dot x=-\tfrac{\lambda\alpha}{2}Jx$. Every circle in this range is a periodic
orbit. For $0<s<0.8$ the same computation in the local frame gives
\[
y=s\bigl[(1-\alpha(1-\chi))e_r-\alpha(2-3\chi/2)e_\theta\bigr],
\qquad
\|y\|\le s\sqrt{1+\alpha^2/4}<1.1 ,
\]
so the projection is again inactive and $\dot s=-\lambda\alpha(1-\chi(s))s<0$. The
circle $s=0.8$ is invariant and uniqueness prevents trajectories from crossing it.
Thus the basin of attraction of $\bar x=0$, meaning the set of initial points whose
trajectories converge to $\bar x$, is exactly the open disk $s<0.8$. Compactness of
$\setS$, uniform positive definiteness and Lipschitz continuity of $\matM$, strong
monotonicity, $F(\bar x)=0$ and $D\matM(\bar x)=0$ therefore do not imply global
convergence.

\paragraph{Failure of Hessian integrability.}
This places the example outside the hypotheses of
\cite{Amochkina1997,Mijajlovic2018}. On the annulus
$\matM(x)^{-1}=R(\theta)B^{-1}R(\theta)^\top$ with
$B^{-1}=\begin{psmallmatrix}2&2\\2&4\end{psmallmatrix}$. Write $H:=\matM^{-1}$ and
$H=3I+N(\theta)$. The traceless symmetric matrix $N(\theta)$ has eigenvalues
$\pm\sqrt5$. Direct multiplication gives, with $\psi=2\theta+\psi_0$,
\[
H_{11}=3+\sqrt5\cos\psi,\qquad
H_{12}=\sqrt5\sin\psi,\qquad
H_{22}=3-\sqrt5\cos\psi,
\]
where $\cos\psi_0=-1/\sqrt5$ and $\sin\psi_0=2/\sqrt5$. Since
$\partial_1\theta=-x_2/s^2$ and $\partial_2\theta=x_1/s^2$, the two independent
conditions in the plane give
\begin{equation}\label{eq:counterexample_residual}
\begin{pmatrix}
\partial_1H_{22}-\partial_2H_{12}\\[2pt]
\partial_2H_{11}-\partial_1H_{12}
\end{pmatrix}
=-\frac{2\sqrt5}{s}
\begin{pmatrix}\cos(\theta+\psi_0)\\[2pt] \sin(\theta+\psi_0)\end{pmatrix},
\qquad\text{of norm}\quad \frac{2\sqrt5}{s} .
\end{equation}
Each component vanishes on isolated rays, but the pair never vanishes together. So
the integrability conditions $\partial_iH_{jk}=\partial_jH_{ik}$ fail at
\emph{every} point of the annulus. The residual norm ranges from
$2\sqrt5/r_\alpha\approx4.07$ at the outer periodic radius to
$2\sqrt5/0.8\approx5.59$ at the inner radius. Hence no $C^3$ potential $\varphi$ can
satisfy $\matM^{-1}=\nabla^2\varphi$ on the annulus.

\begin{remark}[Relation to the convergence certificates]\label{rem:periodic_consistency}
The pointwise estimate \eqref{eq:current_contraction} holds throughout the periodic
annulus. Since the projection is inactive there, its exact ratio is
\[
\Bigl(\frac{2-2\alpha+\alpha^2}{2}\Bigr)^{1/2}=0.9753<\rho=0.9967 .
\]
If $s=\|x\|$, $h=x$ and $Q=\matM^{-1}$ along one of these orbits, then
\[
h^\top Q\dot h=-\lambda\alpha s^2,
\qquad
\tfrac12h^\top\dot Qh=+\lambda\alpha s^2 .
\]
Thus $\dot V=0$, so the metric-derivative term in \eqref{eq:Vdot_state} exactly
cancels the actual contraction. The admissible local-radius supremum from the same
certified constants is $7.7364\times10^{-5}$ in the weighted norm, whereas the
periodic annulus begins at weighted radius $0.8\sqrt2$. There is therefore no
contradiction with Theorem~\ref{thm:exponential}.

The example does not show that condition \eqref{eq:cond19} is necessary. It shows
that the standing assumptions alone do not imply global convergence.
\end{remark}

\paragraph{An integrable metric for contrast.}
Take $\setS=[0,1]^n$ and the separable diagonal metric
\begin{equation}\label{eq:separable_metric}
\matM(x)=\diag\bigl(m_1(x_1),\dots,m_n(x_n)\bigr),
\qquad
m_i(t)=m_{\min}+(m_{\max}-m_{\min})\,4t(1-t),
\end{equation}
with $0<m_{\min}\le m_{\max}$. For $t\in[0,1]$ one has $0\le4t(1-t)\le1$, hence
\[
m_{\min}I\preceq\matM(x)\preceq m_{\max}I,
\qquad
\|\matM(x)\|\le m_{\max},
\qquad
\|\matM(x)^{-1}\|\le m_{\min}^{-1} .
\]
Assumption~\ref{ass:metric_bounds} therefore holds with
$L_{\rm sep}:=\max\{m_{\max},m_{\min}^{-1}\}$, and $L_{\rm sep}=5$ for the values
used in Section~\ref{sec:numerics}.

Writing $H:=\matM^{-1}$ gives $H_{jk}(x)=\delta_{jk}/m_k(x_k)$, where $\delta_{jk}$
is the Kronecker delta, so both sides of the integrability condition reduce to
\[
\partial_iH_{jk}=\delta_{jk}\delta_{ik}\,(1/m_k)'(x_k)=\partial_jH_{ik},
\]
each supported on the single index triple $i=j=k$. The conditions hold on $\setS$,
the potential is $\varphi(x)=\sum_i\varphi_i(x_i)$ with $\varphi_i''=1/m_i$, and
$\matM^{-1}=\nabla^2\varphi$ there. Since $0<m_i\le m_{\max}$, each
$\varphi_i''\ge1/m_{\max}>0$, so $\varphi$ is strongly convex on $\setS$ with
modulus $1/m_{\max}$. The polynomial formula is needed only on $[0,1]$. To apply the
cited whole-space results, extend each $m_i$ smoothly to $\widetilde m_i$ on $\R$,
equal to $m_i$ on a neighbourhood of $[0,1]$, with
$0<\underline m\le\widetilde m_i\le\overline m<\infty$. This preserves separability
and gives a globally strongly convex potential.

Within the diagonal class, Hessian integrability is controlled by coordinate
dependence. It does not follow from a fixed eigenframe. Let
\[
H(x)=\diag\bigl(a(x_2),b(x_1)\bigr),
\]
where $a$ and $b$ are positive $C^1$ functions. The two planar conditions give
\[
a'(x_2)=\partial_2H_{11}=\partial_1H_{21}=0,
\qquad
b'(x_1)=\partial_1H_{22}=\partial_2H_{12}=0,
\]
so $H$ is Hessian-integrable only when $a$ and $b$ are constant. The nonzero control
$H(x)=\diag(1+x_2^2/4,\,1)$ used in Section~\ref{sec:numerics} is a concrete case. A
turning eigenframe is not an obstruction by itself either. For $\varepsilon>0$,
\[
\nabla^2\Bigl(\tfrac12\|x\|^2+\tfrac{\varepsilon}{4}\|x\|^4\Bigr)
=(1+\varepsilon\|x\|^2)I+2\varepsilon xx^\top
\]
is a positive definite Hessian whose radial and tangential eigenframe turns with the
polar angle. In Example~\ref{rem:periodic} the failure is proved by
\eqref{eq:counterexample_residual}, not by rotation alone.

\section{Numerical study}\label{sec:numerics}

This section evaluates the metric of Example~\ref{rem:periodic},
Theorem~\ref{thm:exponential} and condition~\eqref{eq:cond19} on an analytically
solvable family. The certificate bounds are proved in
Section~\ref{sec:stability}. The analytic trajectory and residual formulas are
derived in Section~\ref{sec:counterexample}, which also supplies the separable
control \eqref{eq:separable_metric}. The computations reproduce selected trajectories,
check the residual calculation against known controls, and evaluate the stated
certificate bounds. We report no timings and no comparisons with other methods.

\subsection{Instances and protocol}\label{sec:num_setup}

All computations are deterministic and use \texttt{Float64} arithmetic in Julia
1.12.6. Flow trajectories are computed with \texttt{Rodas5P} and finite-difference
Jacobians. The absolute and relative tolerances are $10^{-10}$, the maximum step is
$0.25$, and output is saved at $2001$ fixed times. The certified local trajectory
is repeated at tolerances $10^{-12}$, and that comparison is reported below. At each
right-hand-side evaluation, put $y=x-\alpha\matM(x)F(x)$ and $Q=\matM(x)^{-1}$. If
$\|y\|\le1.1$ then $y\in\setS$, the projection returns $y$ and the multiplier is
$\nu=0$. Otherwise the projection of $y$ in the $Q$-metric is
$(Q+\nu I)^{-1}Qy$ for the unique $\nu>0$ with $\|(Q+\nu I)^{-1}Qy\|=1.1$, obtained
to tolerance $10^{-13}$. The map $\nu\mapsto\|(Q+\nu I)^{-1}Qy\|$ decreases from
$\|y\|$ to $0$, so a nonnegative root exists exactly when $\|y\|\ge1.1$. Writing
$z:=P_{\setS,Q}(y)$, the Karush--Kuhn--Tucker (KKT) primal-feasibility,
dual-feasibility, stationarity and complementarity residuals are, respectively,
\[
\max\{\|z\|-1.1,0\},\qquad
\max\{-\nu,0\},\qquad
\|Q(z-y)+\nu z\|,\qquad
\bigl|\nu(\|z\|^2-1.1^2)\bigr|.
\]
Their maximum is required to stay below $5\times10^{-11}$, and the largest
observed value where $\nu>0$ is reported in
Table~\ref{tab:activeproj}. A failed integration rejects the run. Every run is
recorded with its inputs, its outputs and the exact package versions used to
produce them.

We use the counterexample metric and a one-parameter family containing it. The
first instance uses the metric of Example~\ref{rem:periodic}, with
$\setS=\{x\in\R^2:\|x\|\le1.1\}$, $F(x)=(I+2J)x$, $\lambda=1$ and $\alpha=0.05$. The
interpolation family is
\begin{equation}\label{eq:interp_family}
\matM_\tau(x)=(1-\tau\chi)I+\tau\chi\,R(\theta)BR(\theta)^\top,
\qquad \tau\in[0,1],
\end{equation}
with $\chi$, $B$ and $R(\theta)$ as in Example~\ref{rem:periodic}. The endpoint
$\tau=0$ is the Euclidean metric and $\tau=1$ is the metric of
Example~\ref{rem:periodic}. The regularity argument of
Section~\ref{sec:counterexample} applies for every $\tau\in[0,1]$. The metric is
integrable at $\tau=0$ and non-integrable at every $\tau>0$. Write
$\mathcal{R}_\tau$ for the integrability residual
$(\partial_1H_{22}-\partial_2H_{12},\ \partial_2H_{11}-\partial_1H_{12})$ of
$H=\matM_\tau^{-1}$. On the annulus
$\|\mathcal{R}_\tau(x)\|=\tau\sqrt5/(2\det(D_\tau)\|x\|)$ with
$D_\tau=(1-\tau)I+\tau B$ and $\det D_\tau=1-\tau/2-\tau^2/4$.

Three further metrics test the residual calculation. No trajectories are computed
for them. Set $A=\diag(1,\tfrac12)$ and $c_\varphi=0.35$. The zero controls are the
bounded Hessian metric $\nabla^2\varphi$, with
$\varphi(x)=\tfrac12x^\top Ax+c_\varphi(\sqrt{1+\|x\|^2}-1)$, and the separable diagonal metric
\eqref{eq:separable_metric}. For the latter we use $m_{\min}=1$ and $m_{\max}=5$.
The nonzero control is $H(x)=\diag(1+x_2^2/4,\,1)$, whose residual
is $(0,x_2/2)$.

Table~\ref{tab:constants} collects metric, residual and certificate quantities for
\eqref{eq:interp_family} at four values of $\tau$.

\begin{table}[htbp]
\centering
\caption{Metric, residual and certificate quantities for the family
\eqref{eq:interp_family}. The common
values are $\mu=1$, $K=\sqrt5$, $F_{\max}=1.1\sqrt5$, $\lambda=1$ and $\alpha=0.05$.
The metric bound $L$ is exact, while $K_M^{\rm ub}:=3.571\tau$ is a certified
upper bound. We set
$c_g^{\rm lb}:=1-\rho-\frac{\alpha}{2}L^2K_M^{\rm ub}F_{\max}$. The columns
$\rho$ and $R_{1/2}$ are also derived from those bounds and are not optimal values.
Displayed decimals are rounded evaluations of closed-form
expressions. Here $R_{1/2}$ is the half-rate choice of
Theorem~\ref{thm:exponential}, $\tau_*$ is the root of $c_g^{\rm lb}$, and the
residual is that of \eqref{eq:interp_family} evaluated at $\|x\|=0.95$. The row
$\tau=1$ uses the metric from Example~\ref{rem:periodic}.}
\label{tab:constants}
\begin{tabular}{lcccccc}
\toprule
$\tau$ & $L$ & $K_M^{\rm ub}$ & $\rho$ & $R_{1/2}$ & $c_g^{\rm lb}$ & $\|\mathcal{R}_\tau\|$ at $\|x\|=0.95$ \\
\midrule
$0$      & $1.000000$ & $0$      & $0.955249$ & $+\infty$            & $+0.044751$ & $0$ \\
$0.1$    & $1.088023$ & $0.3571$ & $0.959474$ & $5.1033\cdot10^{-2}$ & $+0.014531$ & $0.124209$ \\
$\tau_*$ & $1.127058$ & $0.4976$ & $0.961131$ & $3.3288\cdot10^{-2}$ & $0$         & $0.177202$ \\
$1$      & $5.236068$ & $3.5710$ & $0.996695$ & $3.8682\cdot10^{-5}$ & $-6.016995$ & $4.707512$ \\
\bottomrule
\end{tabular}
\end{table}

The step $\alpha=0.05$ lies inside the window $2\mu/(LK^2)$ of \eqref{eq:window} for
every $\tau$, because that window is smallest at $\tau=1$, where it equals
$0.076393$. The root is $\tau_*\approx0.1393475122$. At $\tau=0$ one has
$K_M=C_L=0$, so the local inequality $\eta_R=(1-\rho)-C_LR>0$ places no finite
restriction on $R$. The global lower-bound certificate is positive for
$0\le\tau<\tau_*$ and vanishes at $\tau_*$. At $\tau=1$ it gives no convergence
certificate, while Section~\ref{sec:counterexample} supplies periodic solutions.

\subsection{The annulus and the residual}\label{sec:num_counterexample}

The first two figures concern the counterexample. The first compares computed
trajectories with the analytic regions, while the second compares the numerical
integrability residual with its closed form and three controls.

Figure~\ref{fig:annulus} compares six computed trajectories with the analytic
regions derived in Section~\ref{sec:counterexample}. The exact dynamics give an
inward spiral for $\|x\|<0.8$ and $\dot x=-(\lambda\alpha/2)Jx$ for
$0.8\le\|x\|\le r_\alpha$, where $r_\alpha=1.1/\sqrt{1+\alpha^2/4}$. The annular
period is $4\pi/\alpha=80\pi$. The trajectories are computed to time $520$, slightly
beyond two periods. On the three sampled annular trajectories the maximum radial
error is $4.3\times10^{-13}$ and the maximum phase error is $3.5\times10^{-13}$.

\begin{figure}[htbp]
\centering
\includegraphics[width=0.72\textwidth]{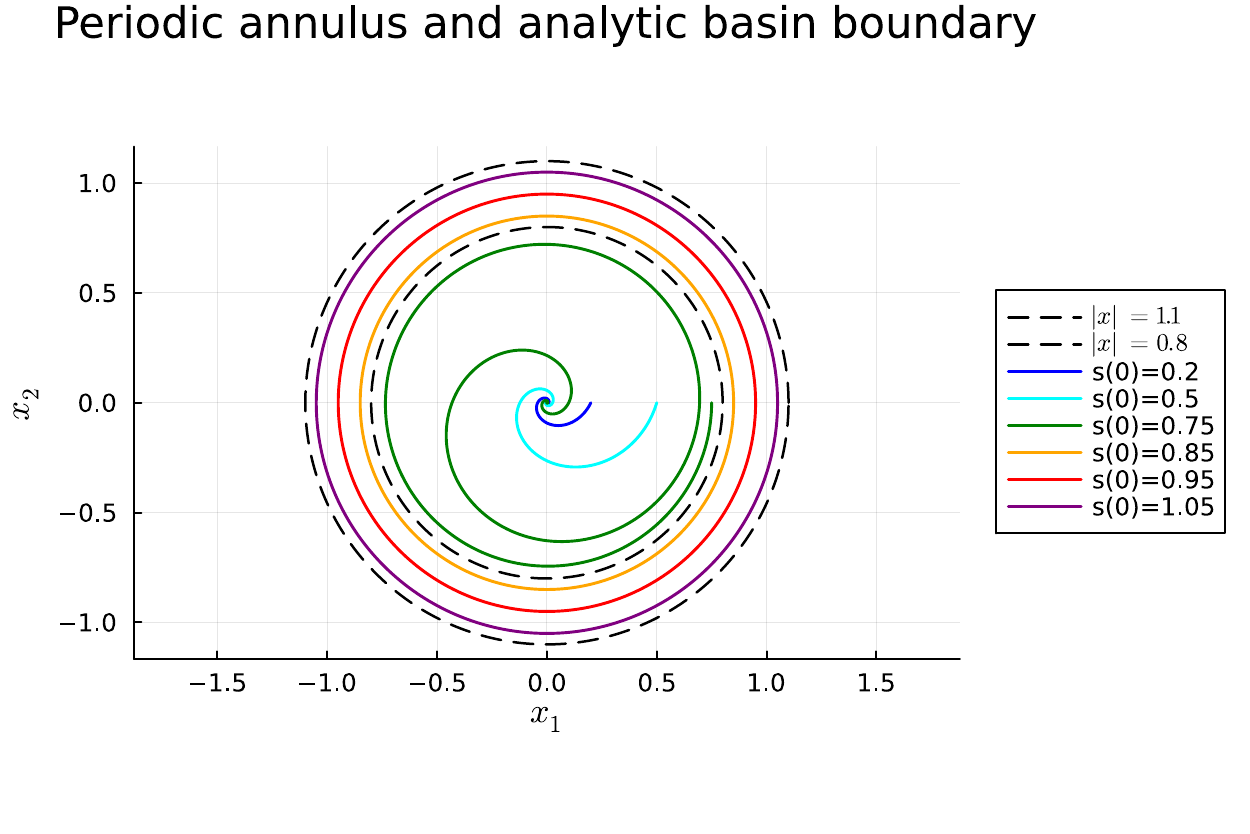}
\caption{Trajectories of \eqref{eq:SD_dynamics} for the metric of
Example~\ref{rem:periodic}, from initial radii $0.2$, $0.5$, $0.75$, $0.85$, $0.95$
and $1.05$ at angle zero. The dashed circles are $\|x\|=0.8$ and the boundary of
$\setS$. The periodic band and the basin boundary are analytic, and the trajectories
are numerically integrated.}
\label{fig:annulus}
\end{figure}

Figure~\ref{fig:residual} checks the numerical residual calculation against its
closed form and three controls.
Equation~\eqref{eq:counterexample_residual} gives the residual
$-(2\sqrt5/\|x\|)(\cos(\theta+\psi_0),\sin(\theta+\psi_0))$, of norm $2\sqrt5/\|x\|$,
and it is that closed form, not the grid, which shows the residual has no zeros.
Central differences with Richardson extrapolation confirm the sampled values. The raw
differences show the expected order close to two, with median observed order
$2.0007$, and the extrapolate is fourth order. On the sampled band
$0.85\le\|x\|\le1.05$ the residual norm runs from $4.2592$ to $5.2613$, against the
range $4.07$ to $5.59$ over the full annulus. The separable control returns zero,
while the bounded Hessian control has maximum residual norm $7.8\times10^{-12}$. For
the nonzero control the maximum vector error relative to $(0,x_2/2)$ is
$5.7\times10^{-12}$. This rules out an implementation that returns zero identically.

\begin{figure}[htbp]
\centering
\includegraphics[width=0.86\textwidth]{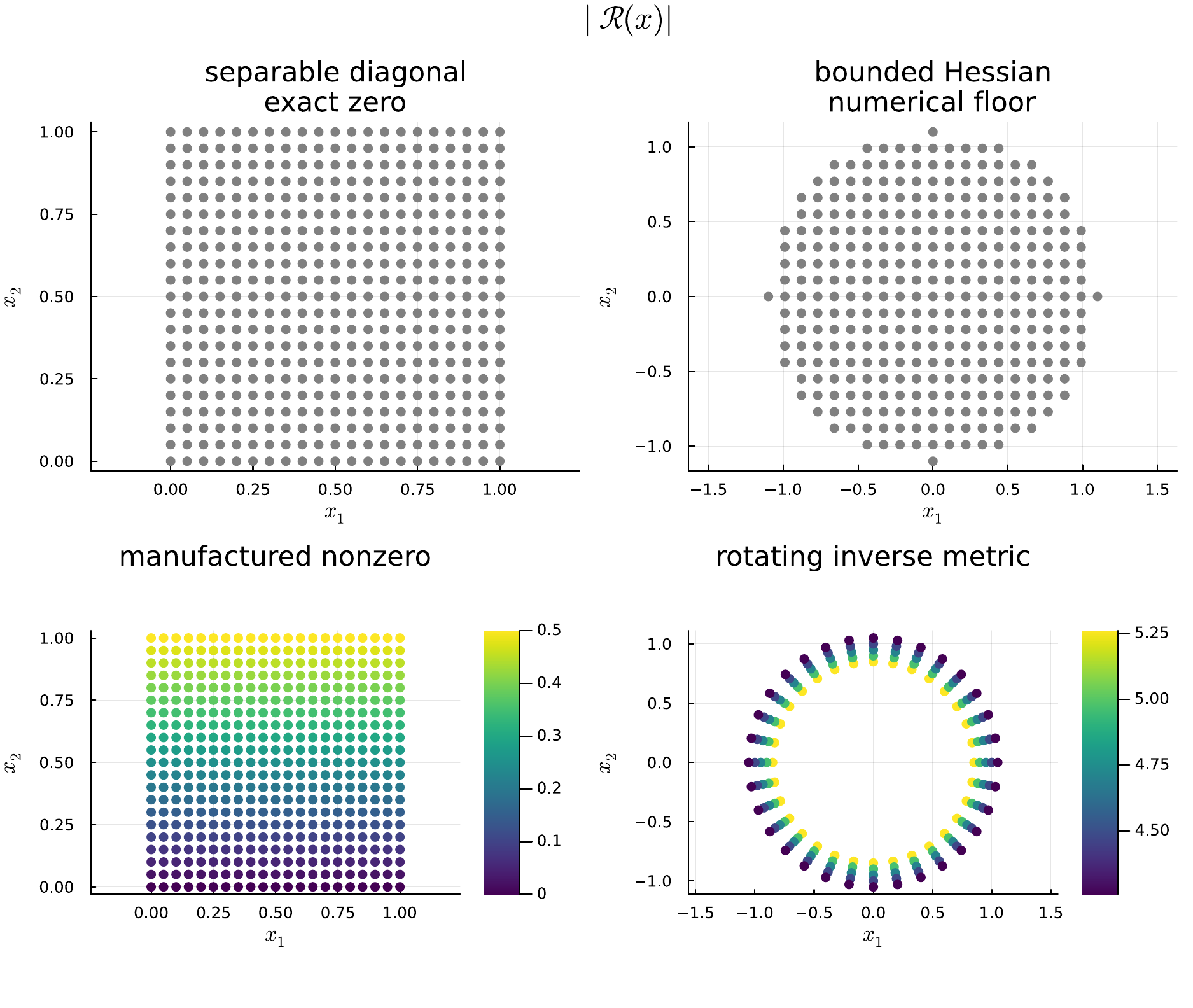}
\caption{Integrability residual $\partial_iH_{jk}-\partial_jH_{ik}$ evaluated by
Richardson-extrapolated central differences. Left to right, the separable diagonal
metric \eqref{eq:separable_metric}, the bounded Hessian metric, the metric
$H(x)=\diag(1+x_2^2/4,\,1)$ whose residual is $(0,x_2/2)$, and the rotating metric of
Example~\ref{rem:periodic} on $0.85\le\|x\|\le1.05$. The first two are integrable and
the last two are not.}
\label{fig:residual}
\end{figure}

\subsection{The two certificates}\label{sec:num_certificates}

The remaining two figures evaluate the certificates on the same family. The first
instantiates the local statement at $\tau=1$, the second traces the global one across
$\tau$.

Figure~\ref{fig:radius} illustrates the local half-rate certificate at $\tau=1$. Here
$C_L=42.714919$, $R_{1/2}=3.8682\times10^{-5}$ and
$\eta_{R_{1/2}}=1.6523\times10^{-3}$. A trajectory started at weighted radius
$R_{1/2}/2$ has no positive envelope excess at the $2001$ saved times on $[0,2000]$.
The maximum difference between the weighted-radius traces from the $10^{-10}$ and
$10^{-12}$ runs is $1.0\times10^{-17}$. A trajectory started at $\|x(0)\|=0.5$ lies outside this
certificate but inside the analytic basin. It illustrates certificate slack and does
not estimate the basin.

\begin{figure}[htbp]
\centering
\includegraphics[width=0.86\textwidth]{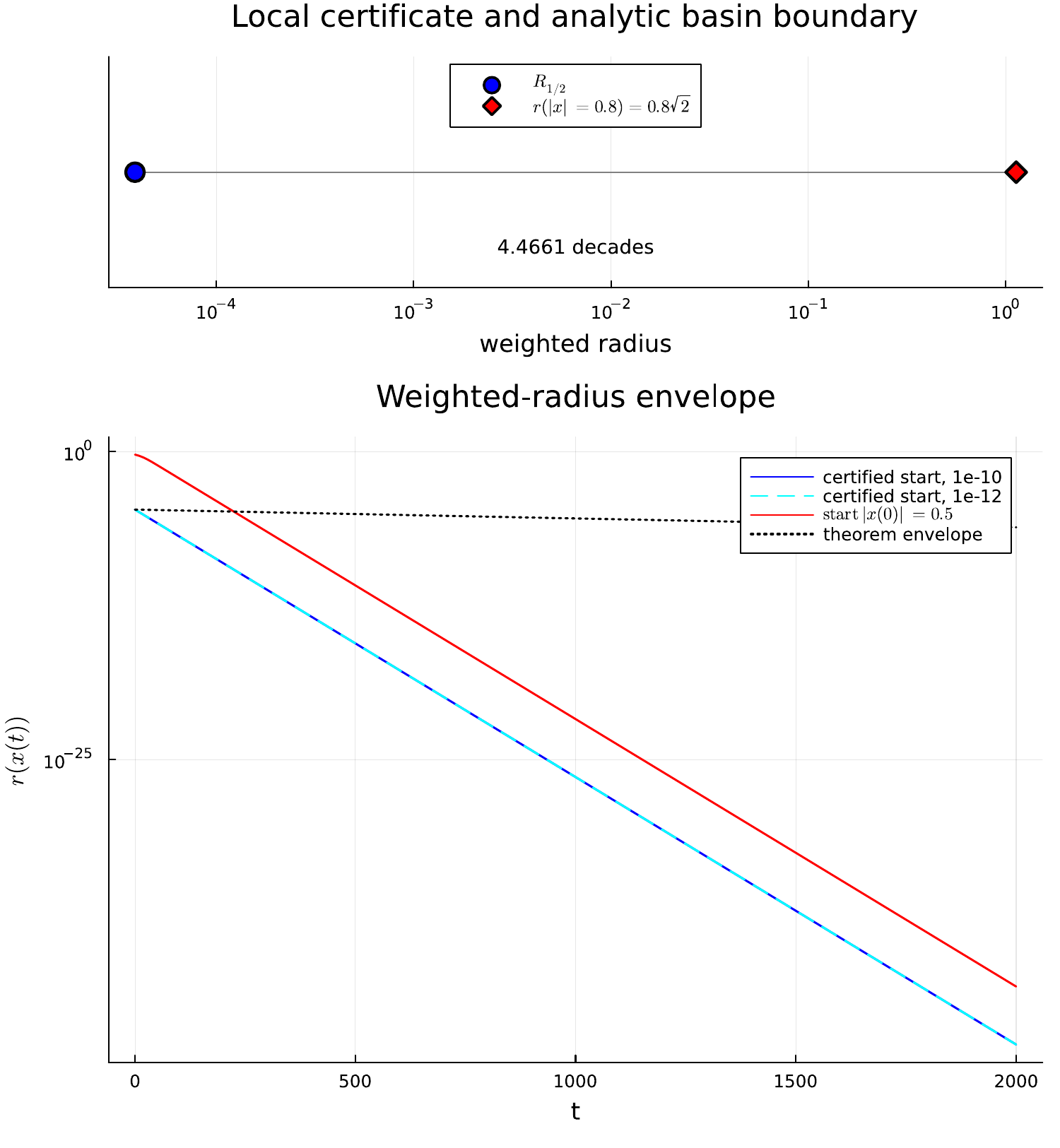}
\caption{The local certificate at $\tau=1$. Above, the half-rate threshold
$R_{1/2}=3.8682\times10^{-5}$ and the weighted radius $0.8\sqrt2$ of the analytic
basin boundary. Below, the weighted radius
$r(x(t))=\|x(t)-\bar x\|_{Q(x(t))}$ of the certified trajectory, its theorem
envelope, and a trajectory started at $\|x(0)\|=0.5$.}
\label{fig:radius}
\end{figure}

Figure~\ref{fig:band} evaluates the global certificate on \eqref{eq:interp_family}.
With $K_M\le3.571\tau$ the lower bound for $c_g$ is positive for $\tau<\tau_*$. At
$\tau=0.1$ it equals $0.014531$, while the residual at $\|x\|=0.95$ equals
$0.124209$. Thus the sufficient condition holds for a non-Hessian metric. Both panels
are analytic.

\begin{figure}[htbp]
\centering
\includegraphics[width=0.78\textwidth]{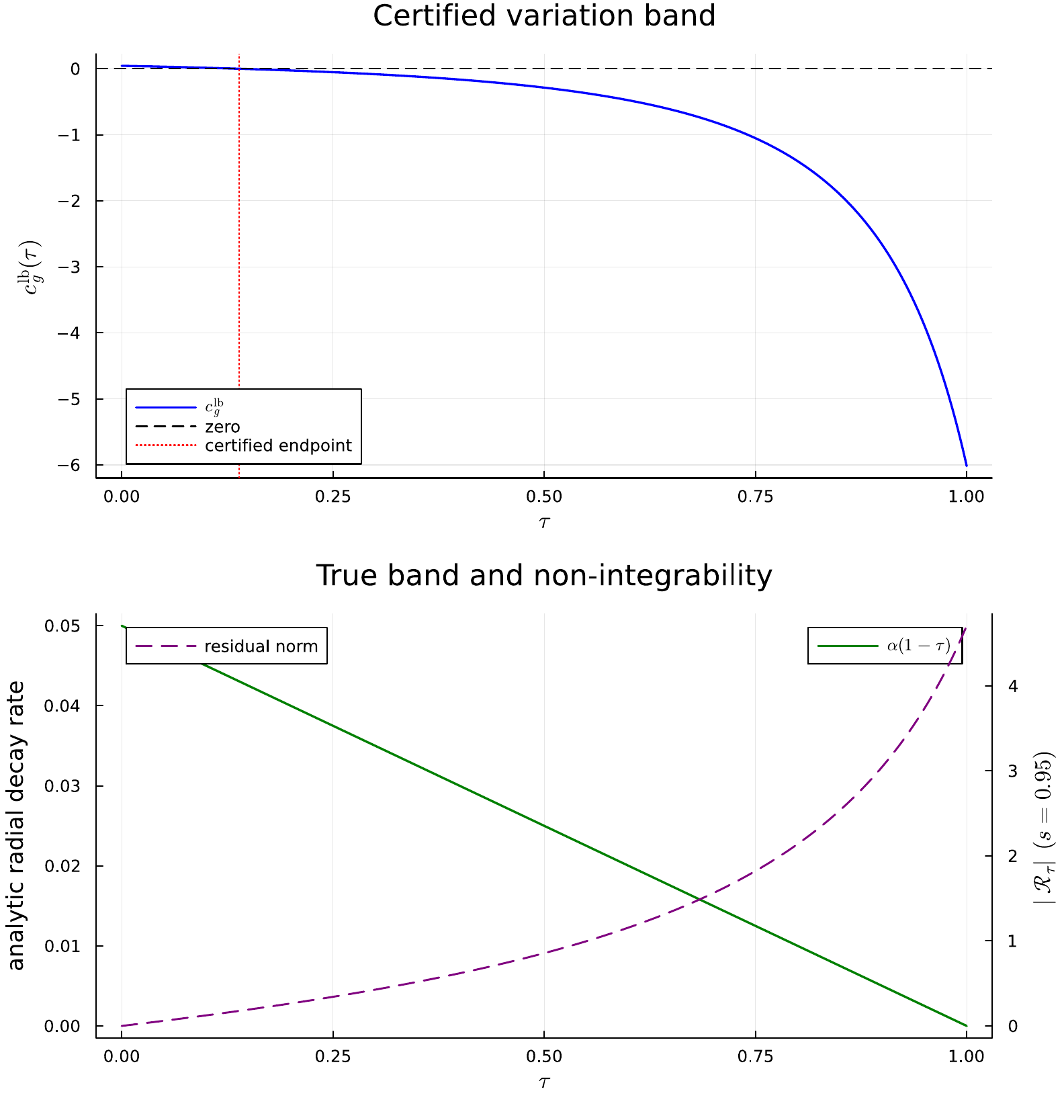}
\caption{The family \eqref{eq:interp_family}. Above, the certified lower bound for
$c_g$ and its root $\tau_*=0.1393$. Below, the exact radial decay rate
$\alpha(1-\tau)$ on $0.8\le\|x\|\le r_\alpha$ and the integrability residual at
$\|x\|=0.95$. Both panels are
analytic evaluations.}
\label{fig:band}
\end{figure}

The two displayed certificates have substantial slack on this family. At $\tau=1$ the
analytic basin boundary has weighted radius $0.8\sqrt2$, since $\chi=1$ there and
$(B^{-1})_{11}=2$. The half-rate threshold $R_{1/2}=3.8682\times10^{-5}$ is $4.4661$
decades below that boundary. The same constants permit every
$R<2R_{1/2}=7.7364\times10^{-5}$. As $R$ approaches this supremum $\eta_R$ approaches
zero, and the gap is $4.1651$ decades. For the global certificate the lower bound
obtained from $K_M\le3.571\tau$ is positive for $\tau<\tau_*$. For $F(x)=(I+2J)x$ the
family converges on the whole range $\tau<1$. Write $s=\|x\|$, $q=\tau\chi(s)$,
$\beta=1-q$ and $D_q=(1-q)I+qB$. In the frame of Example~\ref{rem:periodic} one has
$D_q(1,2)^\top=(\beta,(3\beta+1)/2)^\top$, so
$y=s[(1-\alpha\beta)e_r-\tfrac{\alpha}{2}(3\beta+1)e_\theta]$. Where the projection is
inactive this gives $\dot s=-\lambda\alpha\beta s$. Where it is active, the optimality
condition $Q(z-y)=-\nu z$ for $z=P_{\setS,Q}(y)$ becomes $(I+\nu D_q)z=y$, and
inverting the $2\times2$ matrix gives
\begin{equation}\label{eq:radial_active}
s(1-\alpha\beta)-\inner{z}{e_r}
=\frac{\nu s\bigl[(1-\alpha\beta)(1+\nu\det D_q)
+\tfrac{\alpha q}{4}(3\beta+1)\bigr]}{\det(I+\nu D_q)}\ \ge\ 0 .
\end{equation}
Both cases give $\dot s\le-\lambda\alpha(1-\tau)s$ on $\setS$, since $\chi\le1$. Every
trajectory converges exponentially for $\tau<1$, while at $\tau=1$
Section~\ref{sec:counterexample} supplies the periodic annulus. The endpoint is
exactly $1$ against $\tau_*$ for the certificate, a ratio of
$1/\tau_*=7.1763$. These values quantify this instance under the stated certified
bounds. They do not measure the sharpness of Theorem~\ref{thm:exponential} or of
condition~\eqref{eq:cond19}.

\subsection{The projection with an active constraint}\label{sec:num_active}

In every instance above the forward point stays inside $\setS$, so the projection
returns it unchanged and $\nu=0$. The estimates of
Section~\ref{sec:projreg} are then trivial, because the anchor
$\|P_{\setS,Q}(y)-y\|$ in \eqref{eq:proj_metric_var} vanishes. The following
instance moves the solution to the boundary.

Take $F(x)=x-c$ with $c=(2,0)$ outside $\setS$, keeping
$\setS$, $\lambda$ and $\alpha$ as before and using the metric
\eqref{eq:interp_family}. The variational inequality
$\inner{\bar x-c}{x-\bar x}\ge0$ is the characterisation of the Euclidean
projection of $c$, so $\bar x=1.1\,c/\|c\|=(1.1,0)$ lies on the boundary, for every
metric. Here $\mu=K=1$ and $F_{\max}=1.1+\|c\|=3.1$, so the constants of
Table~\ref{tab:constants} do not carry over.

At $\bar x$ the multiplier is available in closed form. The optimality condition
gives $Q(\bar x-y)=-\nu\bar x$, and $Q\matM=I$ turns this into
$\alpha F(\bar x)=-\nu\bar x$, so
\begin{equation}\label{eq:nu_exact}
\nu(\bar x)=\frac{\alpha\|F(\bar x)\|}{1.1}=\frac{9}{220}
=0.0409090909\ldots ,
\end{equation}
independently of $\tau$ and of the metric. This is
Lemma~\ref{lem:cancellation} in dynamical form, and it is the reference value
against which the solver is checked.

\begin{table}[htbp]
\centering
\caption{The projection at an active constraint, evaluated at $\bar x$ and at eight
further boundary points of the active arc for each $\tau$. Here $y$ is the forward
point, $z:=P_{\setS,Q}(y)$ its projection, $\nu$ the multiplier, and $\|z-y\|$ the
displacement anchoring \eqref{eq:proj_metric_var}. The two ratio columns give the
left side of \eqref{eq:proj_joint} divided by its right side, first with the
constant $L$ of \eqref{eq:proj_L} and then with $L^2$ in its place. A ratio below
one means the estimate holds. Each row summarizes nine direct pointwise projection
solves, with no differential equation integrated.}
\label{tab:activeproj}
\begin{tabular}{lccccc}
\toprule
$\tau$ & $\nu(\bar x)$ & $\|z-y\|$ at $\bar x$ & max ratio, $L$ & max ratio, $L^2$ & max KKT residual \\
\midrule
$1$   & $9/220$ & $0.050312$ & $0.159$ & $0.035$ & $8.3\cdot10^{-14}$ \\
$0.1$ & $9/220$ & $0.045056$ & $0.891$ & $0.819$ & $2.4\cdot10^{-14}$ \\
\bottomrule
\end{tabular}
\end{table}

The computed multiplier agrees with \eqref{eq:nu_exact} to twelve digits at both
values of $\tau$, and $\|T(\bar x)-\bar x\|\le2\times10^{-14}$. A second,
independently coded solve of the scalar equation differs by at most
$7\times10^{-14}$. The four KKT residuals are not by themselves
evidence, since with one smooth constraint three of them hold by construction, and
an implementation that exchanged $Q$ and $\matM$ would satisfy all four. That
exchange was run deliberately and is detected by \eqref{eq:nu_exact} and by the
fixed-point identity.

All sixteen sampled pairs satisfy \eqref{eq:proj_joint}. At $\tau=0.1$ the ratio
reaches $0.891$, so the estimate is close to attained there, and replacing $L$ by
$L^2$ lowers the ratio to $0.819$. At $\tau=1$ the corresponding figures are $0.159$
and $0.035$. This is the numerical counterpart of
Remark~\ref{rem:proj_constant_comparison}.

Together the computations reproduce the analytic trajectory and residual benchmarks,
instantiate the two sufficient conditions, and evaluate the projection estimates at
an active constraint. The computations do not establish the periodic annulus or
non-integrability, and they do not estimate a threshold.

\section{Limitations and Further Work} \label{sec:futurework}

For this finite-step flow, the closest general-matrix convergence analyses assume
that the inverse metric is the Hessian of a strongly convex function and use the
associated Bregman distance as a Lyapunov function
\cite{Amochkina1997,Mijajlovic2018}. Mijajlović and Jaćimović also obtain an
exponential estimate for a state-dependent scalar inverse metric
\cite{Mijajlovic2018}. Alvarez, Bolte and Brahic explain the role of the Hessian
hypothesis. It is exactly what makes the displacement fields gradients in the
metric \cite{Alvarez2004}. Here we study what stability can be proved for a
general matrix metric without that integrability structure.

The answer has three parts. A solution remains a fixed point of the projected map formed with the metric at every state, not only at the solution, which is what allows a Lyapunov argument in the current metric at all. Local exponential convergence survives, with an explicit rate and an explicit certified radius, and a global rate follows under a bound on the variation of the metric. Section~\ref{sec:counterexample} gives a compact feasible set, a strongly monotone operator, and a $C^2$ uniformly positive definite metric whose flow has an annulus of periodic orbits. Remark~\ref{rem:periodic_consistency} explains that this does not make condition~\eqref{eq:cond19} necessary. The construction shows only that the standing assumptions do not imply global convergence.

A natural discrete counterpart has the form
\[
x_{k+1} = P_{\setS,M_k^{-1}}\bigl(x_k - \alpha_k M_k F(x_k)\bigr), \qquad M_k \approx \matM(x_k).
\]
With a line search, this connects to the variable-metric gradient projection
process of \cite{Gawande1988}. The scaled gradient projection method computes the
same projected point and then applies
$x_{k+1}=x_k+\lambda_k(y_k-x_k)$ with a nonmonotone line search
\cite{Bonettini2009}; this update is an explicit Euler step for
\eqref{eq:SD_dynamics} with the metric frozen at $M_k$. At a constant metric, the
related relaxed forward--backward scheme is covered by
\cite{Bolte2003,Abbas2015}. An open question is whether state-dependent choices
$M_k\approx\matM(x_k)$ admit convergence guarantees analogous to the
continuous-time variation bounds proved here. The same question arises for
splitting and forward--backward schemes with evolving preconditioners.

Extending the local analysis to pseudomonotone or quasimonotone operators would
require replacing strict Lyapunov decay, for example by an invariance or
dissipativity argument that still controls metric variation.

Another direction concerns the choice of the state-dependent metric $\matM(\cdot)$. In the present analysis, $\matM(\cdot)$ is assumed to be prescribed and to satisfy uniform boundedness and Lipschitz regularity, but is otherwise arbitrary. This leaves considerable freedom in choosing metrics that reflect problem structure, local conditioning, or trajectory information. Future work may investigate adaptive or data-driven strategies for metric selection, for instance by coupling the SD-SPNN dynamics with estimators of curvature or sensitivity, or by restricting $\matM(x)$ to structured classes (diagonal, low-rank, block-diagonal). Whether such choices can be made while retaining the regularity the analysis requires is open.

Three limitations remain.

The Lipschitz regularity and uniform boundedness assumptions on the metric, while natural for Lyapunov-based analysis, may be restrictive in applications where nonsmooth or abruptly changing preconditioners are desirable.

The trajectory is required to start inside $\setS$. Lemma~\ref{lem:existence} gives viability from $x(0)\in\setS$ and says nothing otherwise. Both the Euclidean gradient-projection flow and the safe monotone flow are well defined from infeasible initial conditions. Antipin proves convergence from any initial point in $\R^n$, under a step size determined by the gradient Lipschitz constant \cite[Theorem~2]{Antipin1994}. Bolte obtains weak convergence for an infeasible start in Hilbert space using a different Lyapunov functional, the feasible descent estimate being unavailable outside the set \cite[Theorem~3.2]{Bolte2003}. For the safe monotone flow the restricted tangent set is nonempty on a neighbourhood of $\setS$ \cite{Allibhoy2025}. Extending the state-dependent flow in that direction is open.

Finally, the metric projection $P_{\setS,\matM(x)^{-1}}$ is an optimization subproblem at every point, and its cost depends on the structure of $\matM(x)$ and on the tractability of that subproblem. Competing non-Euclidean dynamics avoid this where the geometry is chosen for closed-form convenience, as with the entropy kernels that give the logit map on the simplex and the logistic map on the cube \cite{Mertikopoulos2018}. A free metric buys design freedom and pays for it here. Addressing this, for example through structure-exploiting metrics, approximate projections, or inexact dynamics, is an essential step toward large-scale applications.

\section*{Declarations}

\textbf{Conflict of interest:} The author declares that he has no conflict of interest.

\section*{Data and Code Availability}
No external data were used in this study. The code and numerical results are available from the corresponding author upon reasonable request.

\section*{Funding}
This research did not receive any specific grant from funding agencies in the public, commercial, or not-for-profit sectors.

\section*{AI Use Declaration}
During the preparation of this work the author used Claude (Anthropic) in order to assist with manuscript editing, including tightening prose, verifying \LaTeX{} formatting, and checking internal consistency of cross-references and notation. After using this tool, the author reviewed and edited the content as needed and takes full responsibility for the content of the publication.

\section*{Acknowledgement}

The author gratefully acknowledges the institutional support provided by King Fahd University of Petroleum \& Minerals (KFUPM), and the support of the Interdisciplinary Research Center for Smart Mobility and Logistics (IRC-SML) at KFUPM, which facilitated the research environment in which this work was conducted. The author also thanks Professor Qamrul Hasan Ansari for his valuable feedback and insightful input during the development of this work.

\printbibliography

\end{document}